\newcommand*\dif{\mathop{}\!\mathrm{d}}  
\newtheorem{theorem}{Theorem}[section] 
\newtheorem{corollary} {Corollary}[section] 
\newtheorem{proposition}{Proposition}[section] 
\theoremstyle{definition} 
\newtheorem{remark}{Remark}[section] 
\numberwithin{equation}{section} 
\providecommand{\keywords}[1]
{
  \small	
  \textbf{\textit{Keywords: }} #1
}
\providecommand{\MSC}[1]
{
  \small	
  \textit{2020 MSC: } #1   
}
\title{Stochastic dynamics of generalized planar random motions with orthogonal directions\footnote{\textit{Submitted to Journal of Theoretical Probability}}}
\author{Fabrizio Cinque$^1$ and Enzo Orsingher$^2$\\
        \small Department of Statistical Sciences, Sapienza University of Rome, Italy \\
        \small $^1$fabrizio.cinque@uniroma1.it $^2$enzo.orsingher@uniroma1.it
}
\begin{document}

\maketitle

\begin{abstract}
We study planar random motions with finite velocities, of norm $c>0$, along orthogonal directions and changing at the instants of occurrence of a non-homogeneous Poisson process with rate function $\lambda(t),\ t\ge0$. We focus on the distribution of the current position $\bigl(X(t), Y(t)\bigr),\ t\ge0$, in the case where the motion has orthogonal deviations and where also reflection is admitted. In all the cases the process is located within the closed square $S_{ct}=\{(x,y)\in \mathbb{R}^2\,:\,|x|+|y|\le ct\}$ and we obtain the probability law inside $S_{ct}$, on the edge $\partial S_{ct}$ and on the other possible singularities, by studying the partial differential equations governing all the distributions examined. A fundamental result is that the vector process $\bigl(X(t), Y(t)\bigr)$ is probabilistically equivalent to a linear transformation of two (independent or dependent) one-dimensional symmetric telegraph processes with rate function proportional to $\lambda(t)$ and velocity $c/2$. Finally, we extend the results to a wider class of orthogonal-type evolutions. 
\end{abstract} \hspace{10pt}

\keywords{Planar Motions with Finite Velocities; Telegraph Processes; Motions with Reflections; Partial Differential Equations; Bessel Functions}

\MSC{Primary 60K99; 60G50}

\section{Introduction}

Finite speed planar random motions are a very natural class of stochastic processes to describe real movements on a two-dimensional space. We can imagine a particle randomly moving in $\mathbb{R}^2$ with $n\in \mathbb{N}$ velocities $v_k = (v_{kx}, v_{ky})\in \mathbb{R}^2,\ k=1,\dots,n$, changing at random times according to different chance mechanisms. In general, from the velocity $v_j$ it is possible to switch to an arbitrary velocity $v_k$ with probability $p_{jk}\in [0,1],\ j,k=1,\dots, n$. The changes of velocity can be cyclic, that is a deterministic law is assumed to pass from $v_k$ to $v_{k+1}$, with $v_{k+n}=v_k$, or probabilistic, when from the velocity $v_k$ one can switch to the other ones according to some random law. The switches are usually governed by a Poisson process (homogeneous as well as non-homogeneous, as in the present paper) or by some more general renewal processes, see Di Crescenzo \cite{Dc2002}.

The main goal of the investigation for planar random motions is the probability distribution of the vector process $\bigl(X(t),Y(t)\bigr)$, which describes the position, at time $t\ge0$, of the moving particle. The study of finite speed planar random processes in continuous time have been first undertaken by probabilistic and physicists such as Orsingher \cite{O1986} and Masoliver \textit{et al.} \cite{MPW1993}. In Kolesnik and Turbin \cite{KT1998} was proved the interesting connection between planar random motions with $n$ different directions and $n$-th order hyperbolic partial differential equations.

Cyclic planar motions with three directions were treated by Di Crescenzo \cite{Dc2002}, Orsingher \cite{O2002}, Leorato and Orsingher \cite{LO2004} with displacements of random length and with different kinds of mechanism governing the switches of directions.

In the last decades several papers also dealt with multidimensional evolutions, see Samoilenko \cite{S2001}. Lachal \textit{et al.} \cite{LLO2006} studied minimal cyclic random motions, i.e. motions in $\mathbb{R}^d$ with $d+1$ directions forming a regular hyperpolyhedreon, with the technique based on order statistics. The distributions obtained involve Bessel functions the prototype of which is 
$$ I_{0,n}(x) = \sum_{k=0}^\infty \Bigl(\frac{x}{n}\Bigr)^{2k}\frac{1}{k!^n},$$
with $n\in \mathbb{N}$. These results were extended by Lachal \cite{L2006}. Here the author provides a general integral formula for the distribution of a cyclic motion in $\mathbb{R}^d$ with a finite number of velocities. The study of motions in multidimensional spaces with an infinite number of directions has been carried out, for example, by Kolesnik and Orsingher \cite{KO2005}, concerning a planar evolution, and by Orsingher and De Gregorio \cite{ODg2007}, regarding higher spaces.
\\

In this paper we focus on planar random motions with orthogonal directions $d_k=v_k/|v_k|=\bigl(\cos(k\pi/2), \sin(k\pi/2)\bigr),\ |v_k|=c>0$ with $k=0,1,2,3$ (clearly $d_{k+n}=d_k$ for natural $n$) can be classified into five categories. Cyclic motion (see above), standard orthogonal motion (from direction $d_k$ the particle can switch either to $d_{k-1}$ or $d_{k+1}$), standard motion with Bernoulli trials (where the particle can also skip the change of direction), orthogonal motion with reflection (where the particle can switch either to one of the orthogonal directions or bounce back to $d_{k+2}$) and uniformly orthogonal motion (where the new direction is chosen among all the four possibile ones). The process governing the changes of direction is assumed here as a non-homogeneous Poisson process with rate $\lambda(t)$. This is a substantial difference with all the previous researches in this field. The standard orthogonal motion with $\lambda(t)=\lambda$ was examined by Orsingher and Kolesnik \cite{OK1996} and Orsingher \cite{O2000}, while the motion with reflection was considered in Kolesnik and Orsingher \cite{KO2001}. On the other hand Orsingher \textit{et al.} \cite{OGZ2020} studied the cyclic version, also in $\mathbb{R}^3$.

Some recent papers in the physical literature also treat multidimensional finite speed random motions on both the plane and higher spaces, see Mertens \textit{et al.} \cite{MetAl2012}, Elgeti and Gompper \cite{EG2015}, Hartmann \textit{et al.} \cite{HetAl2020}, Mori \textit{et al.} and Sevilla \cite{S2020}. We also recall the work of Santra \textit{et al.} \cite{SBS2020} where a section is also devoted to an orthogonal planar motion. However, explicit general laws of the distribution of the current position $\bigl(X(t),Y(t)\bigr)$ are not derived.

\begin{figure}
	\begin{minipage}{0.5\textwidth}
		\centering
		\begin{tikzpicture}[scale = 0.71]
		\draw[dashed, gray] (4,0) -- (0,4) node[above right, black, scale = 0.9]{$ct$};
		\draw[dashed, gray] (0,4) -- (-4,0) node[above left, black, scale = 0.9]{$-ct$};
		\draw[dashed, gray] (-4,0) -- (0,-4) node[below left, black, scale = 0.9]{$-ct$};
		\draw[dashed, gray] (0,-4) -- (4,0) node[above right, black, scale = 0.9]{$ct$};
		\draw[->, thick, gray] (-5,0) -- (5,0) node[below, scale = 1, black]{$\pmb{X(t)}$};
		\draw[->, thick, gray] (0,-5) -- (0,5) node[left, scale = 1, black]{ $\pmb{Y(t)}$};
		\draw (0,0)--(0.5,0)--(0.5,1)--(-0.3,1)--(-0.3,2)--(-1,2)--(-1,2.2);
		\filldraw (0,0) circle (0.8pt); \filldraw (0.5,0) circle (0.8pt); \filldraw (0.5,1) circle (1pt); \filldraw (-0.3,1) circle (0.8pt); \filldraw (-0.3,2) circle (0.8pt); \filldraw (-1,2) circle (0.8pt);
		\draw (0,0)--(0,0.4)--(-0.5,0.4)--(-0.5,1.6)--(-2.4,1.6);
		\filldraw (0,0) circle (0.8pt); \filldraw(0,0.4) circle (0.8pt); \filldraw (-0.5,0.4) circle (0.8pt); \filldraw (-0.5,1.6) circle(0.8pt);
		\draw (0,0)--(-0.8,0)--(-0.8,-2.2)--(-0.6,-2.2)--(-0.6,-2.6)--(-0.2,-2.6);
		\filldraw(-0.8,0) circle (0.8pt); \filldraw (-0.8,-2.2) circle (0.8pt); \filldraw(-0.6,-2.2) circle(0.8pt); \filldraw(-0.6,-2.6) circle (0.8pt);
		\draw (0,0)--(0,-0.8)--(1,-0.8)--(1,-0.9)--(1.4,-0.9)--(1.8,-0.9)--(1.8, 0.2);
		\filldraw(0,-0.8)circle (0.8pt); \filldraw(1,-0.8)circle (0.8pt); \filldraw(1,-0.9)circle (0.8pt); \filldraw(1.4,-0.9)circle (0.8pt); \filldraw(1.8,-0.9)circle (0.8pt); 
		\end{tikzpicture}
		\caption{\small Sample paths of a standard \newline orthogonal planar motion.}\label{MPS_1}
	\end{minipage}\hfill
	\begin{minipage}{0.5\textwidth}
		\centering
		\begin{tikzpicture}[scale = 0.71]
		\draw[dashed, gray] (4,0) -- (0,4) node[above right, black, scale = 0.9]{$ct$};
		\draw[dashed, gray] (0,4) -- (-4,0) node[above left, black, scale = 0.9]{$-ct$};
		\draw[dashed, gray] (-4,0) -- (0,-4) node[below left, black, scale = 0.9]{$-ct$};
		\draw[dashed, gray] (0,-4) -- (4,0) node[above right, black, scale = 0.9]{$ct$};
		\draw[->, thick, gray] (-5,0) -- (5,0) node[below, scale = 1, black]{$\pmb{X(t)}$};
		\draw[->, thick, gray] (0,-5) -- (0,5) node[left, scale = 1, black]{ $\pmb{Y(t)}$};
		\draw (0,0)--(0.2,0)--(0.2,1.5)--(0.2,1.2)--(1.2,1.2)--(1.2,0.95)--(1.2,1.05)--(1.6,1.05);
		\filldraw (0,0)circle (0.8pt); \filldraw(0.2,0)circle (0.8pt); \filldraw(0.2,1.5)circle (0.8pt); \filldraw(0.2,1.2)circle (0.8pt); \filldraw(1.2,1.2)circle (0.8pt); \filldraw(1.2,0.95)circle (0.8pt); \filldraw(1.2,1.05)circle (0.8pt); 
		\draw (0,0)--(0,0.4)--(-0.2,0.4)--(-0.2, 0.55)--(-0.2,0.25)--(-0.2,0.55)--(-1,0.55)--(-1, 0.4)--(-2.7,0.4);
		\filldraw(0,0.4)circle (0.8pt); \filldraw(-0.2,0.4)circle (0.8pt); \filldraw(-0.2, 0.55);\filldraw(-0.2,0.25)circle (0.8pt); \filldraw(-0.2,0.55)circle (0.8pt); \filldraw(-1,0.55)circle (0.8pt); \filldraw(-1, 0.4)circle (0.8pt); 
		\draw (0,0)--(-0.8,0)--(-0.8,-2)--(-0.8,-1.3)--(-1.1,-1.3);
		\filldraw(-0.8,0)circle (0.8pt); \filldraw(-0.8,-2)circle (0.8pt); \filldraw(-0.8,-1.3)circle (0.8pt); 
		\draw (0,0)--(0,-0.8)--(1,-0.8)--(1,-1)--(1.7,-1)--(1.4,-1)--(1.4, -1.4)--(1.4,-1.2)--(2,-1.2);
		\filldraw(0,-0.8)circle (0.8pt); \filldraw(1,-0.8)circle (0.8pt); \filldraw(1,-1)circle (0.8pt); \filldraw(1.7,-1)circle (0.8pt); \filldraw(1.4,-1)circle (0.8pt); \filldraw(1.4, -1.4)circle (0.8pt); \filldraw(1.4,-1.2)circle (0.8pt); 
		\end{tikzpicture}
		\caption{\small Sample paths of a reflecting orthogonal planar motion.}\label{MPS_2}
	\end{minipage}
\end{figure}

In the orthogonal case we are able to obtain general laws for the vector process $\{\bigl(X(t),Y(t)\bigr)\}_{t\ge0}$ including the case where the switches of directions are governed by a non-homogeneous Poisson process. In this case we are able to prove that the current position $\bigl(X(t),Y(t)\bigr)$, for the standard case, can be represented as 
\begin{equation}\label{decomposizioneIntro}
\begin{cases}
X(t) = U(t) + V(t)\\
Y(t) = U(t) - V(t)
\end{cases}
\end{equation}
where $U=\{U(t)\}_{t\ge0}$ and $V=\{V(t)\}_{t\ge0}$ are independent one-dimensional telegraph process. Their absolutely continuous components $p=p_U(x,t) = p_V(x,t)$ are solution to
\begin{equation}\label{equazioneTelegrafoIntroduzione}
\frac{\partial^2p}{\partial t^2} +\lambda(t)\frac{\partial p}{\partial t} = \frac{c^2}{4} \frac{\partial^2 p}{\partial x^2}.
\end{equation}
This means that $U$ and $V$ are (independent) telegraph processes with rate function $\lambda(t)/2$ and velocity $c/2$. This extends to the non-homogeneous case a previous result for the homogeneous Poisson process governing the switches of direction, see Orsingher \cite{O2000}. For all cases where the telegraph equation (\ref{equazioneTelegrafoIntroduzione}) can be treated, it is possible to arrive at the general law
\begin{equation}
P\{X(t)\in \dif x, \, Y(t)\in \dif y\}/(\dif x\dif y) = p(x,y,t) = \frac{1}{2}p_U\Bigl(\frac{x+y}{2}\Bigr)p_V\Bigl(\frac{x-y}{2}\Bigr),
\end{equation}
for $(x,y)\in S_{ct} = \{(x,y)\in\mathbb{R}^2\,:\,|x|+|y|\le ct\}$. 

If $\Lambda(t) = \int_0^t\lambda(s)\dif s<\infty,\ t\ge0,$ the particle can reach the edge $\partial S_{ct}$ of its support and in this case we have also a direct derivation of the probability law, for example, on the side of $\partial S_{ct}$ belonging to the first quadrant, $\partial S_{ct}\cap (0,\infty)\times(0,\infty)$,
$$f(\eta, t)\dif \eta = P\{X(t)+Y(t)=ct,\, X(t)-Y(t)\in \dif \eta\},\ \ \ |\eta|<ct,$$
This probability satisfies the following second-order differential system 
\begin{equation}\label{sistemaFrontieraAltoDx}
\begin{cases}
\frac{\partial^2 f}{\partial t^2} +2\lambda(t)\frac{\partial f}{\partial t} +\frac{1}{2}\Bigl(\frac{3}{2}\lambda(t)^2+\lambda'(t)\Bigr)f =c^2\frac{\partial^2 f}{\partial \eta^2},\ \ \ |\eta|<ct,\\
f(\eta,t)\ge0,\\
\int_{-ct}^{ct}f(\eta,t)\dif \eta = \frac{1}{2}(e^{-\Lambda(t)/2}-e^{-\Lambda(t)}).
\end{cases}
\end{equation} We note that
$$P\{\bigl(X(t),Y(t)\bigr)\in \partial S_{ct}\} = 2\Bigl(e^{-\frac{1}{2}\Lambda(t)}-e^{-\Lambda(t)}\Bigr)+e^{-\Lambda(t)},$$
where the first component pertains the sides of $\partial S_{ct}$ and the last term is  the probability of reaching the vertexes of $S_{ct}$.

Another important probabilistic information concerns the distribution on squares of half-diagonal $0\le z\le ct$, which reads
\[P\{|X(t)|+|Y(t)|\in \dif z\} = 4 p_U\Bigl(\frac{z}{2}, t \Bigr)\int_0^{\frac{z}{2}} p_V(w,t)\dif w,\]
this can be interpreted as the distribution of the $L_1$-distance of the standard orthogonal process.

A complete picture of the motion is achieved thanks to the analysis of the marginal components. We give the third-order partial differential equation governing the projections $X(t)$ and $Y(t)$ and the characteristics of the one-dimensional motion they describe. The distribution $P\{X(t)\in \dif x\}$, with $|x|<ct$, is sometimes a hard technical problem which we tackle in some special cases.

Finally, we are able to extend the results of the standard orthogonal motion to the motion with Bernoulli trials and with different velocities along the two axes, thus producing some asymmetry.

The third section of the paper concerns the planar motion with reflection. This is substantially different from the standard one because at each switch of direction the particle can either deviate orthogonally or bounce back. This makes the distribution of the position process more complicated because of the appearance of an additional singularity along the diagonals of the support $S_{ct}$. One of the main consequences of the possible reflection of the particle is that a decomposition of the form (\ref{decomposizioneIntro}) makes the processes $U$ and $V$ dependent. Also here these are one-dimensional telegraph processes and we are able to describes their relationship.

When the rate function is constant, $\lambda(t)=\lambda>0\ \forall \ t\ge0$, Kolesnik and Orsingher \cite{KO2001} obtained the distribution on both the edge and the diagonals of $S_{ct}$. Here we extend these results to the case of a non-homogeneous Poisson process governing the switches. In particular, we are again able to connect the probabilities of the planar motion to those of one-dimensional telegraph processes.

At last, we extend the results of the reflecting planar motion to the uniformly orthogonal motion and to a wider class of orthogonal processes.

\section{Standard orthogonal planar random motion}

In this section we consider planar motions with directions $d_k = v_k/|v_k| = (\cos(k\pi/2), \sin(k\pi/2)), \\ k=0,1,2,3$ where $|v_k|=c>0$. The Poisson process governing the changes of direction is non-homogeneous with rate function $\lambda(t)\ge0, \ t\ge0$. At each Poisson event the moving particle can switch to one of the orthogonal directions with probability $1/2$.
\\We denote by $\{\bigl(X(t),Y(t)\bigr)\}_{t\ge0},$ the current position of the moving particle and $\{N(t\}_{t\ge0}$ the number of changes of direction recorded up to time $t$. 

\subsection{The governing partial differential equation}\label{sottoSezioneEquazioneStandard}

\begin{theorem}\label{teoremaEquazioneStandard}
The absolutely continuous component $p = p(x,y,t)\in C^4\bigl(\mathbb{R}^2\times[0,\infty), [0,\infty)\bigr)$ of the distribution of the standard orthogonal process $\{\bigl(X(t),Y(t)\bigr)\}_{t\ge0}$ satisfies the following fourth-order differential equation with time-varying coefficients
\begin{equation}\label{equazioneStandard}
\biggl(\frac{\partial^2 }{\partial t^2}+2\lambda\frac{\partial}{\partial t} +\lambda^2+\lambda'\biggr)\biggl(\frac{\partial^2}{\partial t^2} +2\lambda\frac{\partial}{\partial t} -c^2\Bigl(\frac{\partial^2}{\partial x^2}+\frac{\partial^2}{\partial y^2}\Bigr)\biggr)p +c^4\frac{\partial^4 p}{\partial x^2\partial y^2} = \lambda'\frac{\partial^2 p}{\partial t^2}+(\lambda''+\lambda\lambda')\frac{\partial p}{\partial t}
\end{equation}
where $\lambda=\lambda(t)\in C^2\bigl((0,\infty),[0,\infty)\bigl)$ denotes the rate function of the non-homogeneous Poisson process governing the changes of direction.
\end{theorem}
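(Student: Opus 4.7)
My plan is to derive (\ref{equazioneStandard}) directly from the first-order Kolmogorov forward system for the four sub-densities $f_k(x,y,t)$, $k=0,1,2,3$, where $f_k$ is the joint density of being at position $(x,y)$ at time $t$ while currently moving in direction $d_k$. Each sub-density satisfies
\[\frac{\partial f_k}{\partial t} + c\,d_k\cdot \nabla f_k \,=\, -\lambda(t)\,f_k + \frac{\lambda(t)}{2}\bigl(f_{k-1}+f_{k+1}\bigr),\]
with indices taken modulo $4$, and the full absolutely continuous component is $p=\sum_k f_k$.

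The reduction exploits the fact that $x$-drift occurs only in states $d_0,d_2$ and $y$-drift only in $d_1,d_3$. Setting $P=f_0+f_2$, $Q=f_1+f_3$, $u=f_0-f_2$, $v=f_1-f_3$ (so $p=P+Q$), pairwise sums and differences of the four equations give
\[\partial_t u + c\,\partial_x P = -\lambda u,\qquad \partial_t v + c\,\partial_y Q = -\lambda v,\]
\[\partial_t P + c\,\partial_x u = \lambda(Q-P),\qquad \partial_t Q + c\,\partial_y v = \lambda(P-Q).\]
Applying $\partial_t+\lambda$ to the $P$- and $Q$-equations and substituting $(\partial_t+\lambda)u=-c\,\partial_x P$, $(\partial_t+\lambda)v=-c\,\partial_y Q$ eliminates $u,v$ and produces the coupled second-order system
\[\bigl[(\partial_t+\lambda)^2 - c^2\partial_x^2\bigr]P = \mathcal{M}\,Q,\qquad \bigl[(\partial_t+\lambda)^2 - c^2\partial_y^2\bigr]Q = \mathcal{M}\,P,\]
where $\mathcal{M}=\lambda\partial_t+\lambda^2+\lambda'$.

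Passing next to the sum and difference $p=P+Q$, $D=P-Q$ and using the identity $(\partial_t+\lambda)^2-\mathcal{M}=\partial_t^2+\lambda\partial_t$, the system rewrites as $\mathcal{A}\,p=\tfrac{c^2}{2}(\partial_x^2-\partial_y^2)D$ and $\mathcal{B}\,D=\tfrac{c^2}{2}(\partial_x^2-\partial_y^2)p$, with
\[\mathcal{A}=\partial_t^2+\lambda\partial_t-\frac{c^2}{2}(\partial_x^2+\partial_y^2),\qquad \mathcal{B}=\partial_t^2+3\lambda\partial_t+2\lambda^2+2\lambda'-\frac{c^2}{2}(\partial_x^2+\partial_y^2).\]
Both operators commute with $\partial_x^2-\partial_y^2$, since they contain only time derivatives, multiplication by functions of $t$, and the isotropic $\partial_x^2+\partial_y^2$. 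Applying $\mathcal{B}$ to the first relation and substituting the second eliminates $D$ entirely, giving
\[\mathcal{B}\mathcal{A}\,p \,=\, \frac{c^4}{4}(\partial_x^2-\partial_y^2)^2\,p.\]
Using $(\partial_x^2-\partial_y^2)^2=(\partial_x^2+\partial_y^2)^2-4\partial_x^2\partial_y^2$ and splitting $\mathcal{A},\mathcal{B}$ into their time-only parts $\mathcal{A}_0,\mathcal{B}_0$ minus $\tfrac{c^2}{2}(\partial_x^2+\partial_y^2)$ cancels the pure $(\partial_x^2+\partial_y^2)^2$ contributions on both sides and leaves the identity $\mathcal{B}_0\mathcal{A}_0\,p+c^4\partial_x^2\partial_y^2\,p=c^2(\partial_x^2+\partial_y^2)(\partial_t+\lambda)^2\,p$.

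The main obstacle is the operator algebra required to match this identity with the factored form displayed in (\ref{equazioneStandard}). Because $[\partial_t,\lambda]=\lambda'$, the composition $\mathcal{B}_0\mathcal{A}_0=(\partial_t^2+3\lambda\partial_t+2\lambda^2+2\lambda')(\partial_t^2+\lambda\partial_t)$ is not equal to $(\partial_t+\lambda)^2(\partial_t^2+2\lambda\partial_t)$; a careful computation using $(T+L)(T+2L)-(T+2L)(T+L)=\lambda'$ (with $T=\partial_t$, $L=\lambda$) shows that the discrepancy is exactly $\lambda'\partial_t^2+(\lambda''+\lambda\lambda')\partial_t$, which is precisely the inhomogeneous right-hand side of (\ref{equazioneStandard}). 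For constant $\lambda$ all commutators vanish and the equation collapses to the factored form obtained by Orsingher \cite{O2000}.
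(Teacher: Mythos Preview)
Your proof is correct and follows essentially the same approach as the paper: both derive the first-order Kolmogorov forward system for the four directional densities $f_k$, pass to the same sums and differences (the paper's $g_1,g_2,g_3,g_4$ are your $P,u,Q,v$), reduce to the coupled second-order system for $p$ and $w=D$, and then eliminate the auxiliary function to reach the fourth-order equation. Your explicit use of the commutator $[(\partial_t+\lambda),(\partial_t+2\lambda)]=\lambda'$ to identify the inhomogeneous right-hand side of (\ref{equazioneStandard}) is a slightly crisper bookkeeping device than the paper's direct expansion to (\ref{equazioneStandardEsplicita}), but the substance is identical.
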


\begin{proof}
We use the following notations
\begin{equation}\label{notazione}
f_k(x,y,t) \dif x\dif y = P\{X(t)\in \dif x, Y(t)\in \dif y,D(t) = d_k\},\ \ \ k=0,1,2,3,
\end{equation}
and $\{D(t)\}_{t>0}$ is the process taking the four directions $d_k$. We remark that at Poisson times the particle moving with direction $d_k$ can pass to $d_{k+1},d_{k-1}$ ($d_{k+4} = d_k = d_{k-4}$) with equal probability. We observe that, for $t\ge0$ and $(x,y)\in S_{c(t+\dif t)}$
\[f_0(x,y,t+\dif t) = f_0(x-c\dif t, t)\bigl(1-\lambda(t)\dif t \bigr) + \bigl[f_1(x,y-c\dif t,t )+f_3(x,y+c\dif t,t) \bigr]\lambda(t)\dif t+ o(\dif t) \]
and similarly for $f_1,f_2$ and $f_3$. With this at hand, we obtain the differential system governing the probability densities (\ref{notazione}),
\begin{equation}\label{sistemaStandard}
\begin{cases}
\frac{\partial f_0}{\partial t} = -c\frac{\partial f_0}{\partial x}+\frac{\lambda(t)}{2}(f_1+f_3-2f_0),\\
\frac{\partial f_1}{\partial t} = -c\frac{\partial f_1}{\partial y}+\frac{\lambda(t)}{2}(f_2+f_0-2f_1),\\
\frac{\partial f_2}{\partial t} = c\frac{\partial f_2}{\partial x}+\frac{\lambda(t)}{2}(f_1+f_3-2f_2),\\
\frac{\partial f_3}{\partial t} = c\frac{\partial f_3}{\partial y}+\frac{\lambda(t)}{2}(f_2+f_0-2f_3),
\end{cases} \text{and then\ \ \ } 
\begin{cases}
\frac{\partial g_1}{\partial t} = -c\frac{\partial g_2}{\partial x}+\lambda(t) (g_3-g_1),\\
\frac{\partial g_2}{\partial t} = -c\frac{\partial g_1}{\partial x}-\lambda(t) g_2,\\
\frac{\partial g_3}{\partial t} = -c\frac{\partial g_4}{\partial y}+\lambda(t) (g_1-g_3),\\
\frac{\partial g_4}{\partial t} = -c\frac{\partial g_3}{\partial y}-\lambda(t) g_4.
\end{cases}
\end{equation}
where we used the following transformation to obtain the second system appearing in (\ref{sistemaStandard})
$$f_0+f_2 =g_1, \ \ \ f_0-f_2 = g_2, \ \ \ f_1+ f_3=g_3, \ \ \ f_1-f_3 = g_4.$$
Put $\lambda = \lambda(t)$. By differentions and substitutions we pass from the system of four first-order equations (\ref{sistemaStandard}) to the following system of second-order equations with two unknown functions $g_1$ and $g_3$
\begin{equation}\label{sistemaSecondoOrdineStandard}
\begin{cases}
\frac{\partial^2 g_1}{\partial t^2} = c^2 \frac{\partial^2 g_1}{\partial x^2} -2\lambda\frac{\partial g_1}{\partial t} +\lambda\frac{\partial g_3}{\partial t}+ (\lambda^2 +\lambda')(g_3-g_1),\\
\frac{\partial^2 g_3}{\partial t^2} = c^2 \frac{\partial^2 g_3}{\partial y^2} -2\lambda\frac{\partial g_3}{\partial t} +\lambda\frac{\partial g_1}{\partial t}+ (\lambda^2 +\lambda')(g_1-g_3).
\end{cases}
\end{equation}
By summing up and subtracting equations (\ref{sistemaSecondoOrdineStandard}) we obtain
\begin{equation}\label{sistemapwStandard}
\begin{cases}
\frac{\partial^2 p}{\partial t^2} = \frac{c^2}{2} \biggl[\Bigl(\frac{\partial^2}{\partial x^2}+\frac{\partial^2}{\partial y^2}\Bigr)p+\Bigl(\frac{\partial^2}{\partial x^2}-\frac{\partial^2}{\partial y^2}\Bigr)w\biggr]-\lambda\frac{\partial p}{\partial t},\\[8pt]
\frac{\partial^2 w}{\partial t^2} = \frac{c^2}{2} \biggl[\Bigl(\frac{\partial^2}{\partial x^2}-\frac{\partial^2}{\partial y^2}\Bigr)p+\Bigl(\frac{\partial^2}{\partial x^2}+\frac{\partial^2}{\partial y^2}\Bigr)w\biggr]-3\lambda\frac{\partial w}{\partial t}-2(\lambda^2+\lambda')w,
\end{cases}
\end{equation}
where $p=g_1+g_3$ and $w = g_1-g_3$. We outline the method to pass from (\ref{sistemapwStandard}) to (\ref{equazioneStandard}). By means of the following differential operators
\begin{equation*}
\begin{array}{l}
\Delta^+ = \frac{c^2}{2}\Bigl(\frac{\partial^2}{\partial x^2}+\frac{\partial^2}{\partial y^2}\Bigr),\ \ \ F = \Delta^+ -\lambda\frac{\partial }{\partial t},
\\
\Delta^- = \frac{c^2}{2}\Bigl(\frac{\partial^2}{\partial x^2}-\frac{\partial^2}{\partial y^2}\Bigr), \ \ \ G =\Delta^+  -3\lambda\frac{\partial }{\partial t}-2(\lambda^2+\lambda'),
\end{array}
\end{equation*}
the system (\ref{sistemapwStandard}) reads
\begin{equation}\label{sistemapwStandardSemplificato}
\begin{cases}
\frac{\partial^2 p}{\partial t^2} = Fp+\Delta^-w,\\
\frac{\partial^2 w}{\partial t^2} =\Delta^-p +Gw.
\end{cases}
\end{equation}
The elimination of $w$ from (\ref{sistemapwStandardSemplificato}) is performed by taking the second-order time derivative and by considering the commutativity of the differential operators
\begin{align}\label{}
\frac{\partial^4p}{\partial t^4}&=\frac{\partial^2 }{\partial t^2}Fp+\Delta^-\frac{\partial^2 }{\partial t^2}w = \frac{\partial^2 }{\partial t^2}Fp+\Delta^-\bigl(\Delta^-p +Gw\bigr) \nonumber\\
&= \frac{\partial^2 }{\partial t^2}Fp + (\Delta^-)^2p +G(\Delta^-w )=  \frac{\partial^2 }{\partial t^2}Fp + (\Delta^-)^2p +G\Bigl(\frac{\partial^2 p}{\partial t^2}-Fp\Bigr).\nonumber
\end{align}
After some calculation, we obtain 
\begin{align}\label{equazioneStandardEsplicita}
\frac{\partial^4p}{\partial t^4} + 4\lambda \frac{\partial^3p}{\partial t^3}+(5\lambda+4\lambda')\frac{\partial^2p}{\partial t^2}+ (2\lambda^2+5\lambda\lambda'+\lambda'')\frac{\partial p}{\partial t} + c^4\frac{\partial^4 p}{\partial x^2\partial y^2}\\
=c^2\Bigl(\frac{\partial^2}{\partial x^2}+\frac{\partial^2}{\partial y^2}\Bigr)&\Bigl(\frac{\partial^2p}{\partial t^2}+2\lambda\frac{\partial p}{\partial t} + (\lambda^2+\lambda')p\Bigr)\nonumber
\end{align}
and this coincides with the claimed result (\ref{equazioneStandard}).
\end{proof}

If $\lambda(t) = \lambda>0\ \forall\ t$, equation (\ref{equazioneStandard}) reduces to
\begin{equation}\label{equazioneStandardCasoCostante}
\Bigl(\frac{\partial^2 }{\partial t^2}+2\lambda\frac{\partial}{\partial t} +\lambda^2\Bigr)\biggl(\frac{\partial^2}{\partial t^2} +2\lambda\frac{\partial}{\partial t} -c^2\Bigl(\frac{\partial^2}{\partial x^2}+\frac{\partial^2}{\partial y^2}\Bigr)\biggr)p +c^4\frac{\partial^4 p}{\partial x^2\partial y^2} = 0
\end{equation}
which coincides with (3.6) of Orsingher and Kolesnik \cite{OK1996}..

It is well-known that under Kac's conditions, i.e. if $\lambda,c\longrightarrow\infty$ such that $\lambda/c^2\longrightarrow \sigma^2$, the standard orthogonal planar motion converges to a planar Brownian motion with diffusivity $\sigma^2$ (it is sufficient to divide by $\lambda^3$ equation (\ref{equazioneStandardCasoCostante})). 

\begin{remark}
Let $\lambda=\lambda(t)\in C^3\bigl((0,\infty),[0,\infty)\bigl)$. By means of the transformation $p(x,y,t) = e^{-\int_0^t\lambda(s)\dif s}u(x,y,t)$ with $\int_0^t\lambda(s)\dif s<\infty$, we obtain the fourth order partial differential equation in $u$
\begin{equation*}\label{}
\biggl(\frac{\partial^2}{\partial t^2} -\lambda^2 -c^2\Bigl(\frac{\partial^2}{\partial x^2}+\frac{\partial^2}{\partial y^2}\Bigr)\biggr)\frac{\partial^2u}{\partial t^2}+c^4\frac{\partial^4 u}{\partial x^2\partial y^2} = 2\lambda'\frac{\partial^2u}{\partial t^2}+3(\lambda''+\lambda')\frac{\partial u}{\partial t}+ (\lambda'^2+\lambda\lambda'' +\lambda''')u.
\end{equation*}
\hfill$\diamond$
\end{remark}

\begin{remark}[Standard motion with Bernoulli trials]\label{remarkStandardBernoulliEquazione}
Let us consider a planar orthogonal random motion that behaves as the standard one with the upgrade that, at the Poisson times, it can continue to move along the same direction with probability $1-q\in[0,1)$. This means that at each occurrence of the Poisson events, the particle deviates on each orthogonal direction with probability $q/2$. This extension permits us to consider a refracting behavior for the particle. We call it \textit{$q$-standard motion}, $q\in(0,1]$ (or Standard motion with Bernoulli trials).

By proceeding as above in the case of the standard orthogonal motion, it can be proved that the absolutely continuous component of this generalized motion satisfies equation (\ref{equazioneStandard}) (or equivalently (\ref{equazioneStandardEsplicita})) with rate function $\lambda_q(t) = q\lambda(t)$ instead of $\lambda(t)$. This can be proved by calculating the differential system corresponding to (\ref{sistemaStandard}). Here the rate function $q\lambda(t)$ replaces $\lambda(t)$.

Under Kac's conditions, the $q$-standard motion converges to planar Brownian motion with diffusivity $\sigma^2/q$.\hfill$\diamond$
\end{remark}

\begin{remark}[Asymmetric motion]\label{remarkEquazioneAsimmetrico}
If we assume that the particle can move on the $x$-axis with velocity $c_X=|v_1|=|v_3|>0$ and on the $y$-axis with velocity $c_Y = |v_2|=|v_3|>0$ slight changes are needed. We can ``symmetrize'' the velocities by considering the new space coordinates $(x',y')$ which are related to the original ones by means of the scaling 
$$x' = x/c_X,\ \ \  y' = y/c_Y.$$
These assumptions permit us to consider an asymmetric behavior of the particle. The support of the asymmetric motion $\bigl(X(t),Y(t)\bigr)$ is the rhombus
\begin{equation*}
 R_{t} = \Big\{(x,y)\in \mathbb{R}^2\,:\,\frac{|x|}{c_X}+\frac{|y|}{c_Y}\le t\Big\}.
\end{equation*}
The fourth-order differential equation governing the density of the absolutely continuous component of the asymmetric motion is obtained by performing the following substitutions in (\ref{equazioneStandard})
\[ c^2\Bigl(\frac{\partial^2}{\partial x^2}+\frac{\partial^2}{\partial y^2}\Bigr) \rightarrow c^2_X\frac{\partial^2}{\partial x^2}+c^2_Y\frac{\partial^2}{\partial y^2}, \ \ \ \ c^4\frac{\partial^4 u}{\partial x^2\partial y^2}\rightarrow c_X^2c_Y^2\frac{\partial^4 u}{\partial x^2\partial y^2}. \]
\\Note that all these considerations can be applied to the standard motion with Bernoulli trials as well.
\hfill$\diamond$
\end{remark}

\subsection{Explicit representation of the probability distribution}

As we previously observed, the particle performing the standard planar random motion $\{\bigl(X(t),Y(t)\bigr)\}_{t\ge0}$ is located at time $t>0$ in the square
\begin{equation}\label{supportoQuadrato}
 S_{ct} = \{(x,y)\in \mathbb{R}^2\,:\,|x|+|y|\le ct\}.
\end{equation}
Furthermore, at time $t$ it lies on the border $\partial S_{ct}$ with probability 
\begin{align}\label{massaFrontieraCasoStandard}
P\big\{\bigl(X(t),Y(t)\bigr)\in \partial S_{ct}\big\} &= P\{N(t) = 0\}+\sum_{k=1}^\infty \frac{1}{2^{k-1}}P\{N(t) = k\} \\
&=2e^{-\frac{1}{2}\int_0^t\lambda(s)\dif s} -e^{-\int_0^t\lambda(s)\dif s},\nonumber
\end{align}
provided that $\int_0^t\lambda(s)\dif s<\infty$. Note that the probability of being on the vertices, $V_{ct} = \{(0,\pm ct),(\pm ct,0)\}$, is equal to $P\{N(t)=0\}$.
\\This means that if $\int_0^t\lambda(s)\dif s=\infty$ the moving particle will be inside the square $S_{ct}$ with probability one and $\bigl(X(t),Y(t)\bigr)$ is an absolutely continuous random vector for all $t>0$.

On each side of  $\partial S_{ct}$ the moving particle performs a telegraph process (see below for further explanations).

We note that the absolutely continuous part of the distribution inside $S_{ct}$ and on the edge $\partial S_{ct}$ coincide at the instant $t^*>0$ such that
\[\int_0^{t^*}\lambda(s)\dif s =  -2 \ln\Bigl(1-\frac{1}{\sqrt{2}}\Bigr).\]

Finally, we observe that  in order to have only a singular component on the front edge $\partial S_{ct}$ we must assume that the particle can choose with probability $1/4$ one of the couples of direction $(d_k, d_{k+1})$, with $k=0,1,2,3$ and $d_4 = d_0$, and then move alternatively with the directions of the chosen couple and with switches occurring at Poisson times.

In the following proposition we study the distribution of the motion on the border $\partial S_{ct}$. In this case we deal with a particle that is continuously forced to reach a set which is itself moving outwards. This is the main source of difficulty of the next statement.

\begin{proposition}\label{propFrontieraStandard}
Let $\{\bigl(X(t),Y(t)\bigr)\}_{t\ge0}$ be a standard orthogonal planar random motion with changes of direction paced by a Poisson process with rate $\lambda(t)\in C^1\bigl((0,\infty),[0,\infty)\bigl)$ such that $\Lambda(t) = \int_0^t\lambda(s)\dif s<\infty$ for all $t>0$. Then, for $|\eta|<ct$
\begin{equation}\label{leggeFrontieraAltoDx}
f(\eta, t)\dif \eta =P\{X(t)+Y(t) = ct, X(t)-Y(t)\in \dif \eta\}
\end{equation}
satisfies the differential problem
\begin{equation}\label{sistemaFrontieraAltoDx}
\begin{cases}
\frac{\partial^2 f}{\partial t^2} +2\lambda(t)\frac{\partial f}{\partial t} +\frac{1}{2}\Bigl(\frac{3}{2}\lambda(t)^2+\lambda'(t)\Bigr)f =c^2\frac{\partial^2 f}{\partial \eta^2},\\
f(\eta,t)\ge0,\\
\int_{-ct}^{ct}f(\eta,t)\dif \eta = \frac{1}{2}(e^{-\Lambda(t)/2}-e^{-\Lambda(t)}).
\end{cases}
\end{equation}
\end{proposition}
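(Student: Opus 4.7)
The plan is to decompose the mass on the front-right edge $\{X(t)+Y(t)=ct\}$ according to the current direction, derive a coupled first-order system for the resulting sub-densities, and then eliminate one unknown to obtain the second-order PDE in (\ref{sistemaFrontieraAltoDx}). The key observation is that a trajectory lies on this edge at time $t$ if and only if it has used only the directions $d_0$ and $d_1$ throughout $[0,t]$. On this edge, motion with $d_0$ increases $\eta = X-Y$ at speed $c$ while motion with $d_1$ decreases it at the same speed; at a Poisson event occurring in state $d_0$ the switch lands in $d_1$ (keeping the particle on the edge) with probability $1/2$ and in $d_3$ (removing the trajectory from the edge) with probability $1/2$, with analogous behavior in state $d_1$.

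Setting
$$f_k(\eta,t)\dif\eta = P\{X(t)+Y(t)=ct,\ X(t)-Y(t)\in\dif\eta,\ D(t)=d_k\},\quad k=0,1,$$
so that $f = f_0 + f_1$, the infinitesimal balance in $\dif t$ (exactly as for (\ref{sistemaStandard})) yields
\begin{equation*}
\begin{cases}
\dfrac{\partial f_0}{\partial t} = -c\dfrac{\partial f_0}{\partial \eta} -\lambda(t)\, f_0 + \dfrac{\lambda(t)}{2}\, f_1,\\
\dfrac{\partial f_1}{\partial t} = +c\dfrac{\partial f_1}{\partial \eta} -\lambda(t)\, f_1 + \dfrac{\lambda(t)}{2}\, f_0.
\end{cases}
\end{equation*}
Unlike the bulk system (\ref{sistemaStandard}), this one is dissipative: switches into $d_2$ or $d_3$ remove mass from the edge, producing the full loss rate $-\lambda f_k$ while only half of it is recycled into the companion sub-density. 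Introducing $h = f_0 - f_1$ and taking the sum and the difference of the two equations gives
\begin{equation*}
\frac{\partial f}{\partial t} = -c\frac{\partial h}{\partial \eta} -\tfrac{1}{2}\lambda(t)\, f,\qquad \frac{\partial h}{\partial t} = -c\frac{\partial f}{\partial \eta} -\tfrac{3}{2}\lambda(t)\, h.
\end{equation*}
Differentiating the first equation in $t$, substituting $\partial_t h$ from the second, and then using the first equation once more to replace $c\,\partial_\eta h$ in terms of $f$ and $\partial_t f$, leads after a short computation to exactly the PDE claimed in (\ref{sistemaFrontieraAltoDx}). Nonnegativity is automatic because $f$ is a probability density.

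The normalization follows from a direct conditioning on $N(t)$: a trajectory belongs to the open front-right edge iff its initial direction lies in $\{d_0,d_1\}$ (probability $1/2$) and each of its $N(t)\ge 1$ Poisson switches lands on the other element of $\{d_0,d_1\}$ (joint probability $2^{-N(t)}$). Hence
$$\int_{-ct}^{ct} f(\eta,t)\,\dif\eta = \frac{1}{2}\sum_{k=1}^\infty 2^{-k}\,P\{N(t)=k\} = \frac{1}{2}e^{-\Lambda(t)}\bigl(e^{\Lambda(t)/2}-1\bigr) = \frac{1}{2}\bigl(e^{-\Lambda(t)/2}-e^{-\Lambda(t)}\bigr),$$
in agreement with (\ref{massaFrontieraCasoStandard}) minus the two vertex masses. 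The main delicate point of the argument is the correct bookkeeping in the dissipative first-order system: the asymmetric loss rate $-\lambda f_k$ (versus the mass-preserving $\tfrac{\lambda}{2}(f_{k\pm 1}-2f_k)$ appearing in (\ref{sistemaStandard})) is precisely what produces the nonstandard lower-order coefficients $2\lambda(t)$ and $\tfrac{1}{2}\bigl(\tfrac{3}{2}\lambda(t)^2 + \lambda'(t)\bigr)$ in the reduced second-order equation.
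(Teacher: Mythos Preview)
Your proof is correct and follows essentially the same approach as the paper: the same decomposition into $f_0,f_1$, the same first-order dissipative system (the paper writes it as $\frac{\lambda}{2}(f_1-2f_0)$ rather than $-\lambda f_0+\frac{\lambda}{2}f_1$, but these are identical), the same sum/difference reduction, and the same elimination to reach the second-order PDE. The only cosmetic difference is in the normalization step, where the paper invokes the symmetry $P\{(X,Y)\in\partial F_{ct}\}=\tfrac14 P\{(X,Y)\in\partial S_{ct}\setminus V_{ct}\}$ together with (\ref{massaFrontieraCasoStandard}), while you compute the series $\tfrac12\sum_{k\ge1}2^{-k}P\{N(t)=k\}$ directly; both are immediate.
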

Note that (\ref{leggeFrontieraAltoDx}) is the probability that the motion lies on the border of its support in the first quadrant, that is
\begin{equation}\label{insiemeFrentieraAltoDx}
\partial F_{ct} = \partial S_{ct}^{(1)}\setminus V_{ct} = \{(x,y)\in \mathbb{R}^2\,:\, x+y = ct, |x-y|<ct\}
\end{equation}
where $V_{ct} = \{(ct,0),(0,ct),(-ct,0),(0,-ct)\}$.

Clearly, equivalent results hold for the other components of the border $\partial S_{ct}$.

\begin{proof}
In order that the particle reaches at time $t>0$ a point on the set $\partial F_{ct}$ it must alternate between rightward ($d_0$) and upward ($d_1$) displacements for the whole time interval $[0,t]$.
\\In order to describe the probabilistic behavior on this border, we need
\begin{equation}\label{notazioneFrontieraStandard}
\begin{cases}
f_0(\eta, t) \dif \eta = P\{X(t)+Y(t) = ct,\ X(t)-Y(t)\in \dif \eta,\ D(t) = d_0\},\\
f_1(\eta, t) \dif \eta = P\{X(t)+Y(t) = ct,\ X(t)-Y(t)\in \dif \eta,\ D(t) = d_1\}.
\end{cases}
\end{equation}
We note that if the particle at time $t$ is moving rightward and was on $\partial F_{ct}$, that is $x+y=ct$, then at time $t+\dif t$ it will be on $\partial F_{c(t+\dif t)}$ with coordinates $(x+c\dif t, y)$ that is $X(t+\dif t)-Y(t+\dif t)  =x+c\dif t-y = \eta+\dif\eta\ (\dif \eta = c\dif t)$ and similarly for the upward movements. As a consequence we have that
\[f_0(\eta, t+\dif t) = f_0(\eta-c\dif t,t )(1-\lambda(t)\dif t)+ f_1(\eta +c\dif t,t)\frac{1}{2}\lambda(t)\dif t.\]
By means of simple calculation we obtain
\begin{equation}
\begin{cases}
\frac{\partial f_0}{\partial t} = -c\frac{\partial f_0}{\partial \eta}+\frac{\lambda(t)}{2}(f_1-2f_0),\\
\frac{\partial f_1}{\partial t} = c\frac{\partial f_1}{\partial \eta}+\frac{\lambda(t)}{2}(f_0-2f_1),
\end{cases}\text{and}\ \ \
\begin{cases}
\frac{\partial f}{\partial t} = -c\frac{\partial w}{\partial \eta}-\frac{\lambda(t)}{2}f,\\
\frac{\partial w}{\partial t} = -c\frac{\partial f}{\partial \eta}-\frac{3}{2}\lambda(t) w,
\end{cases}
\end{equation}
where $f=f_0+f_1$ (it coincides with (\ref{leggeFrontieraAltoDx})) and $w = f_0-f_1$. From the above system we extract the second-order  differential equation appearing in (\ref{sistemaFrontieraAltoDx}).

The integral condition of (\ref{sistemaFrontieraAltoDx}) follows from the fact that
\begin{align*}
\int_{-ct}^{ct}f(\eta,t)\dif \eta = P\big\{\bigl(X(t),Y(t)\bigr)\in \partial F_{ct} \big\} &= \frac{1}{4}P\big\{\bigl(X(t),Y(t)\bigr)\in \partial S_{ct}\setminus V_{ct} \big\}\\
&=\frac{1}{2}(e^{-\Lambda(t)/2}-e^{-\Lambda(t)}) 
\end{align*}
which can be easily derived from formula (\ref{massaFrontieraCasoStandard}).
\end{proof}

\begin{remark}
In the special case where $\lambda(t)=\lambda\ \forall \ t$, the system (\ref{sistemaFrontieraAltoDx}) reduces to
\begin{equation}\label{sistemaFrontieraAltoDxCostante}
\begin{cases}
\frac{\partial^2 f}{\partial t^2} +2\lambda\frac{\partial f}{\partial t} +\frac{3}{4}\lambda^2f =\frac{c^2}{4}\frac{\partial^2f}{\partial \eta^2},\\
f(\eta,t)\ge 0,\\
\int_{-ct}^{ct}f(\eta,t)\dif \eta = \frac{1}{2}(e^{-\frac{\lambda t}{2}}-e^{-\lambda t}).
\end{cases}
\end{equation}
The differential equation above can be further reduced to the Klein-Gordon equation by means of the transformation $f(\eta, t) = e^{-\lambda t}q(\eta, t)$,
\[\frac{\partial^2 q}{\partial t^2} -\frac{c^2}{4}\frac{\partial^2 q}{\partial \eta^2}=\frac{\lambda^2}{4}q,\ \ \ \ |\eta|<ct.\]
Hence, the solution of (\ref{sistemaFrontieraAltoDxCostante}) is
\[f(\eta, t)=\frac{e^{-\lambda t}}{4c}\Biggl[\frac{\lambda}{2} I_0\Bigl(\frac{\lambda}{2c}\sqrt{c^2t^2-\eta^2} \Bigr)+\frac{\partial}{\partial t}I_0\Bigl(\frac{\lambda}{2c}\sqrt{c^2t^2-\eta^2} \Bigr)\Biggr]. \]
From this result, we can also extract the conditional distributions with respect to the number of switches, $N(t)$. For integer $k\ge0$,
\begin{align}
 P\{X(t)+Y(t)& = ct, X(t)-Y(t)\in \dif \eta\,|\,N(t)=2k+1\} \nonumber\\
&=2P\{X(t)+Y(t) = ct, X(t)-Y(t)\in \dif \eta\,|\,N(t)=2k+2\}\nonumber\\
&=\frac{(2k+1)!}{2\,k!^2}\frac{(c^2t^2-\eta^2)^k}{(4ct)^{2k+1}}\dif \eta \nonumber
\end{align}
where $|\eta|<ct$.\hfill$\diamond$
\end{remark}

Our main result is stated in the following theorem.
\begin{theorem}\label{teoremaDecomposizioneXYUV}
The standard planar orthogonal random motion $\{\bigl(X(t),Y(t)\bigr)\}_{t\ge0}$, with rate function $\lambda(t)\in C^2\bigl((0,\infty),[0,\infty)\bigl)$, is equal in distribution to the linear transformation of two independent one-dimensional telegraph processes $\{U(t)\}_{t\ge0},\ \{V(t)\}_{t\ge0}$ with parameters $(c/2, \lambda(t)/2)$
\begin{equation}\label{decomposizioneXYUV}
\begin{cases}
X (t)= U(t)+V(t),\\
Y(t) = U(t)-V(t).
\end{cases}
\end{equation}
\end{theorem}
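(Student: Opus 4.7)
My plan is a direct sample-path argument working through the linear change of variables $U(t) = (X(t)+Y(t))/2$, $V(t) = (X(t)-Y(t))/2$, which inverts the transformation appearing in (\ref{decomposizioneXYUV}). The key structural observation is that this rescaled rotation maps the four planar velocities $c\,d_k$, $k=0,1,2,3$, to the four sign pairs $(\pm c/2,\pm c/2)$: explicitly $d_0\leftrightarrow(+,+)$, $d_1\leftrightarrow(+,-)$, $d_2\leftrightarrow(-,-)$, $d_3\leftrightarrow(-,+)$. Hence, between consecutive Poisson times, both $U$ and $V$ move at constant velocity of modulus $c/2$, consistent with the motion of two one-dimensional symmetric telegraph processes with speed parameter $c/2$.

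Next I analyze the effect of a single switching event. Any admissible transition $d_k\to d_{k\pm1}$ flips exactly one of the two signs $(\mathrm{sgn}\,U'(t),\mathrm{sgn}\,V'(t))$; moreover the two admissible destinations $d_{k+1}$ and $d_{k-1}$ affect different coordinates — one flips only the $U$-sign, the other flips only the $V$-sign — and the choice between them is an independent $\mathrm{Bernoulli}(1/2)$ draw. Consequently, by Bernoulli thinning of the inhomogeneous Poisson process of rate $\lambda(t)$ that governs the planar switches, the epochs at which the $U$-velocity changes sign form an inhomogeneous Poisson process with rate $\lambda(t)/2$, the epochs for $V$ likewise, and the two thinned processes are independent. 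Combined with the natural symmetric initial law $D(0)\sim\mathrm{Unif}\{d_0,d_1,d_2,d_3\}$ (which yields independent Rademacher initial signs for $U'$ and $V'$), this construction exhibits $U$ and $V$ as two independent one-dimensional symmetric telegraph processes with parameters $(c/2,\lambda(t)/2)$, whose marginal densities satisfy (\ref{equazioneTelegrafoIntroduzione}).

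The distributional equality (\ref{decomposizioneXYUV}) then follows at the process level by inverting the linear change of variables. The main technical obstacle is making the independence of the thinned processes fully rigorous in the non-homogeneous setting: one must carefully justify that the $\{U\text{-flip},V\text{-flip}\}$ labels attached to the arrivals of the driving Poisson process are i.i.d.\ $\mathrm{Bernoulli}(1/2)$ and independent of the arrival times themselves, so that the two streams are truly independent non-homogeneous Poisson processes. As an analytic cross-check, one can verify that the candidate density $p(x,y,t) = \tfrac12\, p_U\!\bigl(\tfrac{x+y}{2},t\bigr)\, p_V\!\bigl(\tfrac{x-y}{2},t\bigr)$ satisfies the fourth-order PDE (\ref{equazioneStandard}): under $u=(x+y)/2$, $v=(x-y)/2$ one has $\partial_x^2+\partial_y^2=\tfrac12(\partial_u^2+\partial_v^2)$ and $\partial_x^2\partial_y^2=\tfrac{1}{16}(\partial_u^2-\partial_v^2)^2$, so (\ref{equazioneStandard}) factors into the product of two copies of (\ref{equazioneTelegrafoIntroduzione}) in $u$ and $v$ separately, which the product density trivially satisfies and which uniquely identifies the joint law together with the support constraint and the boundary masses from Proposition \ref{propFrontieraStandard}.
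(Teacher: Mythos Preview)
Your argument is correct and takes a genuinely different route from the paper's own proof. The paper proceeds by \emph{analytic verification}: after the change of variables $u=(x+y)/2$, $v=(x-y)/2$ it checks, through a fairly lengthy computation relying repeatedly on the one-dimensional telegraph equation (\ref{CauchyTelegrafo}), that the candidate density $q(u,v,t)=\tfrac12\,p_U(u,t)\,p_V(v,t)$ satisfies the transformed fourth-order PDE (\ref{equazioneStandard_uv}); the boundary mass is then matched separately against the system (\ref{sistemaFrontieraAltoDx}). Your approach instead gives a \emph{pathwise} construction: you observe that under the rotation each admissible switch $d_k\to d_{k\pm1}$ flips exactly one of the two velocity signs, that the two options always correspond to ``flip $U$'' versus ``flip $V$'', each with probability $1/2$ regardless of the current state, and then invoke Bernoulli thinning of the non-homogeneous Poisson stream to obtain independent rate-$\lambda(t)/2$ processes for the two components. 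Combined with the uniform initial direction (giving independent Rademacher initial signs), this yields process-level equality in distribution, which is stronger and more transparent than the PDE check. The paper actually sketches exactly this heuristic later, in its discussion of the reflecting motion (cf.\ (\ref{decomposizioneProcessiPoisson}) and the paragraph following it), but does not use it as the proof of Theorem~\ref{teoremaDecomposizioneXYUV}.

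One caution on your final paragraph: the claim that (\ref{equazioneStandard}) ``factors into the product of two copies of (\ref{equazioneTelegrafoIntroduzione})'' so that the product density ``trivially'' satisfies it is an overstatement. When $\lambda$ depends on $t$, the two one-dimensional telegraph operators (in $u$ and in $v$) share the time derivatives with time-varying coefficients and do not commute in a way that makes this factorization literal; the paper in fact devotes most of its proof (equations (\ref{derivata2quv0})--(\ref{termineDimostrazioneDecomposizione})) to carrying out this verification carefully. Since your main argument is the probabilistic one and does not depend on this analytic check, this does not affect correctness, but you should temper the wording of the cross-check.
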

The statement (\ref{decomposizioneXYUV}) shows that $\bigl(X(t),Y(t)\bigr)$ is a rotation of $45^\text{o}$ of the vector with independent components $\bigl(U(t),V(t)\bigr)$.

Note that the absolutely continuous component $p(x,y,t)\dif x\dif y \\= P\{X(t)\in \dif x,Y(t)\in \dif y\}$ is given by 
\begin{equation}\label{}
p(x,y,t)=\frac{1}{2}p_U\Bigl(\frac{x+y}{2},t\Bigr)p_V\Bigl(\frac{x-y}{2},t\Bigr)
\end{equation}
where $p_U$ and $p_V$ are the densities of one-dimensional telegraph processes, i.e. they are solutions of the Cauchy problem
\begin{equation}\label{CauchyTelegrafo}
\begin{cases}
\frac{\partial^2 u}{\partial t^2} +\lambda(t)\frac{\partial u}{\partial t} =\frac{c^2}{4}\frac{\partial^2 u}{\partial x^2}, \\
u(x,0) = \delta(x),\ \ \ \frac{\partial u}{\partial t}(x,t)\big|_{t=0} = 0,
\end{cases}
\end{equation}
where $\delta$ is the Dirac delta function centered in $x=0$.

\begin{proof}
If $\Lambda(t) =\int_0^t\lambda(s)\dif s<\infty$ the distribution of $\bigl(X(t),Y(t)\bigr),\ t\ge0,$ has a positive probability mass on $\partial S_{ct}$. It is trivial to show that, for $|\eta|<ct$, 
\[f(\eta,t) = \frac{e^{-\frac{\Lambda(t)}{2}}}{2}\,p_V\Bigl(\frac{\eta}{2}, t\Bigr)\]
 satisfies (\ref{sistemaFrontieraAltoDx}). The probability on the vertices of the border $\partial S_{ct}$ easily follows as well.

For the absolutely continuous component of the distribution, it is sufficient to prove that $p = p(x,y,t)$  satisfies (\ref{equazioneStandard}). In order to spare calculation we pass to variables
\begin{equation}\label{cambioVariabiliLeggeEsplicita}
u = \frac{x+y}{2},\ \ \ v = \frac{x-y}{2} 
\end{equation}
and prove that 
\begin{equation}\label{p_uv}
q(u,v,t)=\frac{1}{2}p_U(u,t)p_V(v,t)=p(u+v,u-v,t)
\end{equation}
(disregarding the factor $1/2$) satisfies, with $\lambda=\lambda(t)$,
\begin{equation}\label{equazioneStandard_uv}
\biggl(\frac{\partial^2 }{\partial t^2}+2\lambda\frac{\partial}{\partial t} +\lambda^2+\lambda'\biggr)\biggl(\frac{\partial^2}{\partial t^2} +2\lambda\frac{\partial}{\partial t} -\frac{c^2}{2}\Bigl(\frac{\partial^2}{\partial u^2}+\frac{\partial^2}{\partial v^2}\Bigr)\biggr)q +\frac{c^4}{16}\Bigl(\frac{\partial^2}{\partial u^2}-\frac{\partial^2}{\partial v^2}\Bigr)^2q = \lambda'\frac{\partial^2 q}{\partial t^2}+(\lambda''+\lambda\lambda')\frac{\partial q}{\partial t}
\end{equation}
where $p_U$ and $p_V$ satisfy (\ref{CauchyTelegrafo}). Equation (\ref{equazioneStandard_uv}) is obtained from (\ref{equazioneStandard}) by means of the change of variables (\ref{cambioVariabiliLeggeEsplicita}).
\\From (\ref{p_uv}) we have that
\begin{equation}\label{derivata2quv0}
\frac{\partial^2 q}{\partial t^2}=\frac{c^2}{4}\Bigl(\frac{\partial^2}{\partial u^2}+\frac{\partial^2}{\partial v^2}\Bigr)q - \lambda\frac{\partial q}{\partial t}+2\frac{\partial p_U}{\partial t}\frac{\partial p_V}{\partial t}
\end{equation}
and then, by multiplying by $2$ both members of (\ref{derivata2quv0}) we obtain
\begin{equation}\label{derivata2quv}
\frac{\partial^2 q}{\partial t^2}+2\lambda\frac{\partial q}{\partial t}-\frac{c^2}{2}\Bigl(\frac{\partial^2}{\partial u^2}+\frac{\partial^2}{\partial v^2}\Bigr)q =4\frac{\partial p_U}{\partial t}\frac{\partial p_V}{\partial t}-\frac{\partial^2 q}{\partial t^2}
\end{equation}
where the second-order planar telegraph different operator appearing in (\ref{equazioneStandard_uv}) emerges. By multiplying both members of (\ref{derivata2quv}) by the operator $\frac{\partial^2 }{\partial t^2}+2\lambda(t)\frac{\partial}{\partial t} +\lambda^2(t)$, we have
\begin{align}\label{operatore1_quv}
\Bigl(\frac{\partial^2 }{\partial t^2}+2\lambda\frac{\partial}{\partial t} +&\lambda^2\Bigr) \biggl( \frac{\partial^2 q}{\partial t^2}+2\lambda\frac{\partial q}{\partial t}-\frac{c^2}{2}\Bigl(\frac{\partial^2}{\partial u^2}+\frac{\partial^2}{\partial v^2}\Bigr)q \biggr) \\
&=\Bigl(\frac{\partial^2 }{\partial t^2}+2\lambda\frac{\partial}{\partial t} +\lambda^2\Bigr) \Bigl(4\frac{\partial p_U}{\partial t}\frac{\partial p_V}{\partial t}-\frac{\partial^2 q}{\partial t^2} \Bigr)\nonumber\\
& = \lambda^2 \Bigl(4\frac{\partial p_U}{\partial t}\frac{\partial p_V}{\partial t}-\frac{\partial^2 q}{\partial t^2} \Bigr)+ 2\lambda \Bigl(4\frac{\partial^2 p_U}{\partial t^2}\frac{\partial p_V}{\partial t}+4\frac{\partial p_U}{\partial t}\frac{\partial^2 p_V}{\partial t^2}-\frac{\partial^3 q}{\partial t^3} \Bigr)\nonumber\\
&\ \ \  +  \Bigl(4\frac{\partial^3 p_U}{\partial t^3}\frac{\partial p_V}{\partial t}+8\frac{\partial^2 p_U}{\partial t^2}\frac{\partial^2 p_V}{\partial t^2}+4\frac{\partial p_U}{\partial t}\frac{\partial^3 p_V}{\partial t^3}-\frac{\partial^4 q}{\partial t^4} \Bigr)\nonumber
\end{align}
where simple derivations are involved. In view of (\ref{CauchyTelegrafo}) applied successively and by convenient substitutions we produce the identity
\begin{align}\label{operatore2_quv}
\frac{c^4}{16}\Bigl(\frac{\partial^2}{\partial u^2}-\frac{\partial^2}{\partial v^2}\Bigr)^2q & = p_V\Bigl(\frac{\partial^4 p_U}{\partial t^4} +2\lambda\frac{\partial^3 p_U}{\partial t^3} +(\lambda^2+2\lambda')\frac{\partial^2 p_U}{\partial t^2}+(\lambda\lambda' +\lambda'')\frac{\partial p_U}{\partial t}\Bigr)\\
 &\ \ \ + p_U\Bigl(\frac{\partial^4 p_V}{\partial t^4} +2\lambda\frac{\partial^3 p_V}{\partial t^3} +(\lambda^2+2\lambda')\frac{\partial^2 p_V}{\partial t^2}+(\lambda\lambda' +\lambda'')\frac{\partial p_V}{\partial t}\Bigr)\nonumber\\
 &\ \ \ - 2\biggl( \frac{\partial^2 p_U}{\partial t^2}\frac{\partial^2 p_V}{\partial t^2}+\lambda^2\frac{\partial p_U}{\partial t}\frac{\partial p_V}{\partial t}+\lambda\Bigl(\frac{\partial^2 p_U}{\partial t^2}\frac{\partial p_V}{\partial t}+\frac{\partial p_U}{\partial t}\frac{\partial^2 p_V}{\partial t^2} \Bigr) \biggr)\nonumber.
\end{align}
By summing up (\ref{operatore1_quv}) and (\ref{operatore2_quv}) and by keeping in mind (\ref{derivata2quv0}), the terms with time-varying coefficients $\lambda^0,\lambda$ and $\lambda^2$ cancel out and we obtain
\begin{align}
\Bigl(\frac{\partial^2 }{\partial t^2}+2\lambda\frac{\partial}{\partial t} &+\lambda^2\Bigr) \biggl( \frac{\partial^2 q}{\partial t^2}+\lambda\frac{\partial q}{\partial t}-\frac{c^2}{2}\Bigl(\frac{\partial^2}{\partial u^2}+\frac{\partial^2}{\partial v^2}\Bigr)q \biggr) +\frac{c^4}{16}\Bigl(\frac{\partial^2}{\partial u^2}-\frac{\partial^2}{\partial v^2}\Bigr)^2q \label{termineDimostrazioneDecomposizione}\\
& = (\lambda\lambda'+\lambda'')\Bigl(\frac{\partial p_U}{\partial t} p_V+p_U
\frac{\partial p_V}{\partial t}\Bigr) + 2\lambda'\Bigl(\frac{\partial^2 p_U}{\partial t^2} p_V+p_U\frac{\partial^2 p_V}{\partial t^2}\Bigr) \nonumber \\
&=(\lambda\lambda'+\lambda'')\frac{\partial q}{\partial t}+ 2\lambda'\Bigl(\frac{\partial^2q}{\partial t^2}- 2\frac{\partial p_U}{\partial t}\frac{\partial p_V}{\partial t} \Bigr) \nonumber\\
& =(\lambda\lambda'+\lambda'')\frac{\partial q}{\partial t}+ 2\lambda'\frac{\partial^2q}{\partial t^2}-2\lambda'\biggl( \frac{\partial^2 q}{\partial t^2}+\lambda\frac{\partial q}{\partial t}-\frac{c^2}{4}\Bigl(\frac{\partial^2}{\partial u^2}+\frac{\partial^2}{\partial v^2}\Bigr)q \biggr) \nonumber
\end{align}
where in the last equality we suitably used (\ref{derivata2quv0}). Equation (\ref{termineDimostrazioneDecomposizione}) coincides with (\ref{equazioneStandard_uv}) and this completes the proof concerning the absolutely continuous component.
\end{proof}

In force of (\ref{decomposizioneXYUV}), assuming that $X(t)+Y(t)=z\in [-ct,ct]$, the distribution of the standard orthogonal motion coincides with that of a one-dimensional telegraph process with rate $\lambda(t)/2$ and velocities $\pm c$. It similarly happens if we assume $X(t)-Y(t)=z\in [-ct,ct]$. We point out that the displayed conditions imply that the motion lies on a segment that is parallel to some sides of $\partial S_{ct}$.
\\

Representation (\ref{decomposizioneXYUV}) permits us to focus on the study of one-dimensional processes only. We thus know the explicit distribution of $\bigl(X(t),Y(t)\bigr)$ when the rate function $\lambda(t),\ t>0,$ has one of the following forms, with $\lambda >0$,
\[\lambda(t)=\lambda,\ \ \ \ \ \lambda(t)=\frac{\lambda}{t} \ \text{(Foong and Van Kolck \cite{FVk1992})},\]
\[\lambda(t)=\lambda\text{th}(\lambda t) \ \text{(Iacus \cite{I2001})},\ \ \ \lambda(t)=\lambda\text{coth}(\lambda t) \ \text{(Garra and Orsingher \cite{GO2016})}.\]
For $\lambda(t) =\lambda/t,\: \lambda\text{coth}(\lambda t)$, the singular component of the distribution is absent because $\int_0^t \lambda(s)\dif s = \infty$.

\begin{remark}[$L_1$-distance]
Let $\{Z(t)\}_{t\ge0}$ be the stochastic process describing the Manhattan distance of the particle $\{\bigl(X(t),Y(t)\bigr)\}_{t\ge0}$ from the origin of the coordinate axes, i.e. $Z(t)= |X(t)|+|Y(t)|$. Let $\{U(t)\}_{t\ge0}$ and $\{V(t)\}_{t\ge0}$ be two independent one-dimensional telegraph processes with parameters $(c/2, \lambda(t)/2)$. By means of Theorem \ref{teoremaDecomposizioneXYUV} we have that, for $z\in [0,ct)$
\begin{align}\label{}
P\{Z(t)\in \dif z\}/\dif z &= \frac{\partial }{\partial z}P\{|X(t)|+|Y(t)|\le z\} =4\frac{\partial }{\partial z}\int_0^z\dif x\int_0^{z-x}p(x,y,t)\dif y \nonumber\\
&=2\frac{\partial }{\partial z}\int_0^z\dif x\int_0^{z-x}p_U\Bigl(\frac{x+y}{2},t\Bigr)p_V\Bigl(\frac{x-y}{2},t\Bigr)\dif y\nonumber\\
&= 2\,p_U\Bigl(\frac{z}{2},t\Bigr)\int_0^z p_V\Bigl(\frac{2x-z}{2},t\Bigr)\dif x=4\,p_U\Bigl(\frac{z}{2},t\Bigr)\int_0^{\frac{z}{2}} p_V(w,t)\dif w\nonumber.
\end{align}

We can also express the maximum $L_1$-distance of $\bigl(X(t),Y(t)\bigr)$ from the origin in terms of the distribution of a functional of a one-dimensional telegraph process. Let $\beta\in [0,ct]$, with (\ref{decomposizioneXYUV}) at hand, we obtain that
\begin{align}\label{}
P\Big\{&\max_{0\le s\le t}Z(s)\le \beta\Big\} = P\Big\{\max_{0\le s\le t}(\,|X(s)|+|Y(s)|\,)\le\beta\Big\}   \nonumber\\
&=P\Big\{\max_{0\le s\le t}\big\{\,X(s)+Y(s),\,-X(s)-Y(s),\,X(s)-Y(s),\,-X(s)+Y(s)\,\big\}\le\beta\Big\}\nonumber \\
&=P\Big\{\max_{0\le s\le t}\{2|U(s)|,\,2|V(s)|\}\le\beta\Big\}=P\Big\{\max_{0\le s\le t} |U(s)|\le\beta/2\Big\}^2.
 \nonumber
\end{align}
As far as we know, the distribution of $\max_{0\le s\le t} |U(s)|$ is unknown for all rate functions $\lambda(t)$. The interested reader can find a detailed study of the (one-sided) maximum of the constant rate one-dimensional telegraph process in Cinque and Orsingher \cite{CO2020}.
\hfill $\diamond$
\end{remark}

\begin{figure}
	\begin{minipage}{0.5\textwidth}
		\centering
		\begin{tikzpicture}[scale = 0.71]
		\draw[dashed, gray] (4,0) -- (0,4) node[above right, black, scale = 0.9]{$ct$};
		\draw[dashed, gray] (0,4) -- (-4,0) node[above left, black, scale = 0.9]{$-ct$};
		\draw[dashed, gray] (-4,0) -- (0,-4) node[below left, black, scale = 0.9]{$-ct$};
		\draw[dashed, gray] (0,-4) -- (4,0) node[above right, black, scale = 0.9]{$ct$};
		\draw (2.5,0) -- (0,2.5) node[right, black, scale = 0.8]{$z$};
		\draw(0,2.5) -- (-2.5,0) node[above, black, scale = 0.8]{$-z\ \ $};
		\draw (-2.5,0) -- (0,-2.5) node[left, black, scale = 0.8]{$-z$};
		\draw (0,-2.5) -- (2.5,0) node[above, black, scale = 0.8]{$z$};
		\draw[->, thick, gray] (-5,0) -- (5,0) node[below, scale = 1, black]{$\pmb{X(t)}$};
		\draw[->, thick, gray] (0,-5) -- (0,5) node[left, scale = 1, black]{ $\pmb{Y(t)}$};
		\draw (0,0)--(0.5,0)--(0.5,1)--(-0.3,1)--(-0.3,1.9)--(-1,1.9)--(-1,1.5);
		\filldraw (0,0) circle (0.8pt); \filldraw (0.5,0) circle (0.8pt); \filldraw (0.5,1) circle (1pt); \filldraw (-0.3,1) circle (0.8pt); \filldraw (-0.3,1.9) circle (0.8pt); \filldraw (-1,1.9) circle (0.8pt);
		\draw (0,0)--(-0.8,0)--(-0.8,-2.2)--(-1.2,-2.2)--(-1.2,-2.8);
		\filldraw (-0.8,0) circle (1pt); \filldraw (-0.8,-2.2) circle (0.8pt); \filldraw (-1.2,-2.2) circle (0.8pt); \filldraw (-1.2,-2.2) circle (0.8pt);
		\draw (0,0)--(0,-0.8)--(1,-0.8)--(1,-0.9)--(1.6,-0.9)--(1.6, 0.4);
		\filldraw(0,-0.8)circle (0.8pt);\filldraw(1,-0.8)circle (0.8pt);\filldraw(1,-0.9)circle (0.8pt);\filldraw(1.6,-0.9)circle (0.8pt);
		\end{tikzpicture}
		\caption{\small The trajectory starting with $d_0$ has $L_1$-distance from the origin equal to  $z\in$$(0,ct)$\newline at time $t$. The trajectory starting  with $d_3$ \newline has maximum $L_1$-distance from the origin \newline equal to $z$.}\label{L1dist}
	\end{minipage}\hfill
	\begin{minipage}{0.5\textwidth}
		\centering
		\begin{tikzpicture}[scale = 0.71]
		\draw[dashed, gray] (3,0) -- (0,4.5) node[right, black, scale = 0.9]{$c_Yt$};
		\draw[dashed, gray] (0,4.5) -- (-3,0) node[above left, black, scale = 0.9]{$-c_Xt$};
		\draw[dashed, gray] (-3,0) -- (0,-4.5) node[below left, black, scale = 0.9]{$-c_Yt$};
		\draw[dashed, gray] (0,-4.5) -- (3,0) node[above right, black, scale = 0.9]{$c_Xt$};
		\draw[->, thick, gray] (-5,0) -- (5,0) node[below, scale = 1, black]{$\pmb{X(t)}$};
		\draw[->, thick, gray] (0,-5.2) -- (0,5.2) node[left, scale = 1, black]{ $\pmb{Y(t)}$};
		\draw (0,0)--(-0.5,0)--(-0.5,1)--(-0.5,2.25)--(-1.5,2.25);
		\filldraw (-0.5,0) circle (1pt); \filldraw (-0.5,1) circle (0.8pt); \filldraw (-0.5,2.25) circle (0.8pt);
		\draw (0,0)--(0,0.3)--(1.2,0.3)--(1.2,0.6)--(1.2,1.18)--(0.8,1.18)--(0.5,1.18)--(0.15,1.18);
		\filldraw (0,0.3) circle (1pt); \filldraw (1.2,0.3) circle (0.8pt); \filldraw (1.2,0.6) circle (0.8pt); \filldraw (1.2,1.18) circle (0.8pt);\filldraw (0.8,1.18) circle (0.8pt); \filldraw (0.5,1.18) circle (0.8pt); 
		\draw (0,0)--(0.2,0)--(0.2,-1.3)--(0.2,-1.5)--(0.8,-1.5)--(0.8,-1.1)--(0.6,-1.1)--(0.6,-2.1);
		\filldraw (0.2,0) circle (1pt); \filldraw (0.2,-1.3) circle (0.8pt); \filldraw (0.2,-1.5) circle (0.8pt); \filldraw (0.8,-1.5) circle (0.8pt);\filldraw (0.8,-1.1) circle (0.8pt); \filldraw (0.6,-1.1) circle (0.8pt);
		\draw (0,0)--(0,-0.4)--(-0.8,-0.4)--(-0.8,-1)--(-0.5,-1)--(-0.5,-1.5)--(-0.5,-2.4)--(-0.65,-2.4)--(-0.8,-2.4);
		\filldraw (0,-0.4) circle (0.8pt); \filldraw (-0.8,-0.4) circle (0.8pt); \filldraw (-0.8,-1) circle (0.8pt);\filldraw (-0.5,-1) circle (1pt);  \filldraw (-0.5,-1.5) circle (0.8pt); \filldraw (-0.5,-2.4) circle (0.8pt); \filldraw (-0.65,-2.4) circle (0.8pt);
		\end{tikzpicture}
		\caption{\small Sample paths of an asymmetric ($c_X<c_Y$) standard orthogonal planar motion with Bernoulli trials.}\label{AsimmetricoBernStandard}
	\end{minipage}
\end{figure}

\begin{remark}[Standard motion with Bernoulli trials]\label{remarkStandardBernoulliLegge}
Let us consider the $q$-standard motion $\{\bigl(X_q(t),Y_q(t)\bigr)\}_{t\ge0}$, with $q\in(0,1]$ being the probability that the particle changes direction at Poisson times (see Remark \ref{remarkStandardBernoulliEquazione}). In this case, the particle, at time $t\ge0$, will be located in the square $S_{ct}$ defined in (\ref{supportoQuadrato}). However, if $\int_0^t \lambda(s)\dif s<\infty$, it can reach also the vertices of the square whatever the number of Poisson events in $[0,t]$ is,
\[P\big\{\bigl(X_q(t),Y_q(t)\bigr)\in V_{ct}\big\} = \sum_{n=0}^\infty(1-q)^nP\{N(t) = n\}  = e^{-q\int_0^t \lambda(s)\dif s}.\]

It is very interesting to observe that, by keeping in mind Remark \ref{remarkStandardBernoulliEquazione} and by proceeding as above, also this motion can be expressed by means of the representation (\ref{decomposizioneXYUV}), but in this case the one-dimensional telegraph processes are such that at all Poisson events, the change of direction occurs with probability $q\in(0,1]$. Now, by observing that such a telegraph process is equal in distribution to a telegraph process of rate $q\lambda(t)$, we obtain the following statement:
\\\textit{the standard motion with Bernoulli trials (with probability of changes equal to $q\in(0,1]$) is equal in distribution to a standard orthogonal motion with rate function $\lambda_q(t) = q\lambda(t)$}.
\end{remark}

\begin{remark}[Asymmetric motion]\label{remarkAsimmetricoLegge}
Let $\{\bigl(X(t),Y(t)\bigr)\}_{t\ge0}$ be the stochastic motion introduced in Remark \ref{remarkEquazioneAsimmetrico}, i.e. describing the movement of a particle running on the $x$-axis with velocity $c_X=|v_1|=|v_3|>0$ and on the $y$-axis with velocity $c_Y = |v_2|=|v_3|>0$. Then the following representation holds (in distribution)
\begin{equation}\label{decomposizioneXYUVasimmetrica}
\begin{cases}
X (t)= c_X\bigl(U(t)+V(t)\bigr)\\
Y(t) = c_Y\bigl(U(t)-V(t)\bigr)
\end{cases}
\end{equation}
where $\{U(t)\}_{t\ge0}$ and $\{V(t)\}_{t\ge0}$ are two independent one-dimensional telegraph processes with parameters $(1/2, \lambda(t)/2)$, with $\lambda(t)\in C^2\bigl((0,\infty),[0,\infty)\bigl)$.
Remark \ref{remarkEquazioneAsimmetrico} and Theorem \ref{teoremaDecomposizioneXYUV} lead to (\ref{decomposizioneXYUVasimmetrica}) concerning an asymmetric standard orthogonal planar random motion. 

In view of Remark \ref{remarkStandardBernoulliLegge}, all these considerations holds for the standard motion with Bernoulli trials as well.\hfill$\diamond$
\end{remark}

\subsection{The marginal component of the planar motion}

Thanks to decomposition (\ref{decomposizioneXYUV}), we can obtain the law of the marginal components of the vector process $\{\bigl(X(t),Y(t)\bigr)\}_{t\ge0}$, that is we can infer the distribution of the projection $X(t)$ (or equivalently $Y(t)$). The distribution of $X(t)$ follows from the convolution of two one-dimensional telegraph processes $U(t)$ and $V(t)$ with rates $(\lambda(t)/2, c/2)$. For $|x|<ct$
\begin{equation*}
P\{X(t)\in \dif x\} = p(x,t)\dif x = \frac{\dif x}{2}\int_{\max\{-\frac{ct}{2}+x, -\frac{ct}{2}\}}^{\min\{\frac{ct}{2}, \frac{ct}{2}+x\}} p_U(y,t)p_V(x-y,t)\dif y.
\end{equation*}
Alternatively, from (\ref{decomposizioneXYUV}) we can write
\begin{equation*}
P\{X(t)\in \dif x\} =  \frac{\dif x}{2}\int_{|x|-ct}^{-|x|+ct} p_U\Bigl(\frac{x+y}{2},t\Bigr)p_V\Bigl(\frac{x-y}{2},t\Bigr)\dif y.
\end{equation*}
We recall that the investigation of the sum of two independent telegraph process with constant rate function, $\lambda(t) = \lambda>0 \ \forall\ t$, has been carried out by Kolesnik \cite{K2014}.
\\

\begin{remark}
It is well-known that in the cases of $\lambda(t) = \lambda, \lambda\text{th}(\lambda t), \lambda\text{coth}(\lambda t)$, the distribution of the one-dimensional telegraph process is given in terms of the modified Bessel functions of order $0$ and $1$. It is interesting to show the following convolution
\begin{align}
&\int_{\max\{-\frac{ct}{2}+x, -\frac{ct}{2}\}}^{\min\{\frac{ct}{2}, \frac{ct}{2}+x\}} I_0\Bigl(\frac{\lambda}{c}\sqrt{\frac{c^2t^2}{4}-y^2}\Bigr)  I_0\Bigl(\frac{\lambda}{c}\sqrt{\frac{c^2t^2}{4}-(y-x)^2}\Bigr)\Bigg|_{x = 0}\dif y\nonumber\\ 
&\ \ \ \ = \sum_{h=0}^\infty\sum_{k=0}^\infty \Bigl(\frac{\lambda}{2c}\Bigr)^{2h+2k} \frac{1}{h!^2k!^2}\int_{-\frac{ct}{2}}^{\frac{ct}{2}}\Bigl(\frac{c^2t^2}{4}-y^2\Bigr)^{h+k}\dif y \nonumber\\
&\ \ \ \  = \sum_{h=0}^\infty\sum_{k=0}^\infty \Bigl(\frac{\lambda}{2c}\Bigr)^{2h+2k} \frac{1}{h!^2k!^2} \Bigl(\frac{c t}{2}\Bigr)^{2h+2k+1}\frac{\Gamma(h+k+1)\Gamma(1/2)}{\Gamma(h+k+1+1/2)}  \label{passaggioCalcoloConvoluzioneI0in0}\\
&\ \ \ \  = c \sum_{h=0}^\infty\sum_{k=0}^\infty  \Bigl(\frac{\lambda t}{2}\Bigr)^{2h+2k} \frac{t\, (h+k)!^2}{h!^2k!^2(2h+2k+1)!} = c \sum_{h=0}^\infty\sum_{l=h}^\infty  \Bigl(\frac{\lambda t}{2}\Bigr)^{2l} \frac{t \,l!^2}{h!^2(l-h)!^2(2l+1)!} \nonumber\\
&\ \ \ \ = c \sum_{l=0}^\infty\Bigl(\frac{\lambda t}{2}\Bigr)^{2l} \frac{t }{(2l+1)!}\sum_{h=0}^l  \binom{l}{h}^{2}   = c \sum_{l=0}^\infty\Bigl(\frac{\lambda t}{2}\Bigr)^{2l} \frac{t }{(2l+1)!}\binom{2l}{l}\label{passaggio2CalcoloConvoluzioneI0in0} \\
&\ \ \ \ =c \sum_{l=0}^\infty\frac{1}{l!^2}\Bigl(\frac{\lambda }{2}\Bigr)^{2l} \int_0^t s^{2l}\dif s  = c\int_0^t I_0(\lambda s) \dif s \nonumber
\end{align}
where in step (\ref{passaggioCalcoloConvoluzioneI0in0}) we used the duplication formula of the Gamma function and in (\ref{passaggio2CalcoloConvoluzioneI0in0}) we applied the Vandermonde identity.
Similarly, we obtain
$$ \int_{\max\{-\frac{ct}{2}+x, -\frac{ct}{2}\}}^{\min\{\frac{ct}{2}, \frac{ct}{2}+x\}} I_0\Bigl(\frac{\lambda}{c}\sqrt{\frac{c^2t^2}{4}-y^2}\Bigr)  \frac{\partial }{\partial t}I_0\Bigl(\frac{\lambda}{c}\sqrt{\frac{c^2t^2}{4}-(y-x)^2}\Bigr)\Bigg|_{x = 0}\dif y =\frac{c}{2}\Bigl( I_0(\lambda t)-1\Bigl)$$
 and
$$ \int_{\max\{-\frac{ct}{2}+x, -\frac{ct}{2}\}}^{\min\{\frac{ct}{2}, \frac{ct}{2}+x\}} \frac{\partial }{\partial t}I_0\Bigl(\frac{\lambda}{c}\sqrt{\frac{c^2t^2}{4}-y^2}\Bigr)  \frac{\partial }{\partial t}I_0\Bigl(\frac{\lambda}{c}\sqrt{\frac{c^2t^2}{4}-(y-x)^2}\Bigr)\Bigg|_{x = 0}\dif y =\frac{\lambda c}{4}\Bigl(I_1(\lambda t)-\frac{\lambda t}{2}\Bigr).$$
We note that the density of the marginal $X(t)$ of the planar standard motion attains its maximum at $x = 0$.
\hfill$\diamond$
\end{remark}

Let $\{\bigl(X(t),Y(t)\bigr)\}_{t\ge0}$ be a standard orthogonal planar motion. We note that when the vector process moves horizontally, that is when $X(t)$ is active, the vertical motion is suspended up to the occurrence of the next Poisson event. When the vector proceeds along the vertical direction and a Poisson event occurs, the particle uniformly switches to either the rightward or leftward direction.
\\

Below we rigorously prove that the marginal component of the standard orthogonal planar motion is distributed as a one-dimensional telegraph-type process with three velocities, $\pm c$ ($c>0$) and $0$ and such that it changes speed at Poisson paced times with the following rule: if the current speed is $c$ or $-c$, then it stops, meaning that it changes to speed $0$; otherwise, if the velocity is $0$, it uniformly selects the next velocity between $c$ and $-c$.

\begin{theorem}\label{teoremaMotoConStop}
Let $\{N(t)\}_{t\ge0}$ be a Poisson process with rate function $\lambda(t)\in C^1\bigl((0,\infty),[0,\infty)\bigl)$, $V_0$ a r.v., independent of $N(t)$, and $\{V(t)\}_{t\ge0}$ such that $V(0) = V_0$,
\[V_0 = \begin{cases}\begin{array}{r l}
+c, & w.p.\ 1/4,\\0, & w.p.\ 1/2,\\-c, & w.p.\ 1/4,
\end{array}\end{cases} \text{and} \ \ \begin{cases}
P\{V(t+\dif t)=c\,|\,V(t) = 0,\,N(t,t+\dif t]=1\}=1/2,\\
P\{V(t+\dif t)=-c\,|\,V(t) = 0,\,N(t,t+\dif t]=1\}=1/2,\\
P\{V(t+\dif t)=0\,|\,V(t) = \pm c,\,N(t,t+\dif t]=1\}=1.
\end{cases} \]
Let $\{\mathcal{T}(t)\}_{t\ge0}$ be a one-dimensional stochastic motion such that
$\mathcal{T}(t) = \int_0^t V(s)\dif s$, then the transition density $p(x,t)\dif x = P\{\mathcal{T}(t)\in \dif x\}$ satisfies the following third-order partial differential equation
\begin{equation}\label{equazioneLeggeMarginaleStandard}
\frac{\partial^3p}{\partial t^3} +3\lambda(t)\frac{\partial p^2}{\partial t^2} +\bigl(2\lambda^2(t)+\lambda'(t) \bigr) \frac{\partial p}{\partial t}= c^2\frac{\partial^3 p}{\partial t\partial x^2}+c^2 \lambda(t)\frac{\partial^2 p}{\partial x^2}.
\end{equation}
Finally, if $\int_0^t\lambda(s)\dif s<\infty$,
\begin{equation}\label{singolaritaMarginale}
P\{\mathcal{T}(t) = ct\}=P\{\mathcal{T}(t) = -ct\}=\frac{1}{2}P\{\mathcal{T}(t) = 0\}=\frac{e^{-\int_0^t\lambda(s)\dif s}}{4}.
\end{equation}
\end{theorem}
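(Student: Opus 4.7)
The plan is to derive the third-order equation (\ref{equazioneLeggeMarginaleStandard}) by setting up the forward Kolmogorov system for the joint law of position and velocity and then algebraically eliminating the auxiliary unknowns; the atoms (\ref{singolaritaMarginale}) will be treated separately by a pathwise argument.

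First I would introduce the three sub-densities
\[
f_{+}(x,t)\dif x = P\{\mathcal{T}(t)\in\dif x,\,V(t)=+c\},\quad f_{0}(x,t)\dif x = P\{\mathcal{T}(t)\in\dif x,\,V(t)=0\},\quad f_{-}(x,t)\dif x = P\{\mathcal{T}(t)\in\dif x,\,V(t)=-c\},
\]
so that $p=f_{+}+f_{0}+f_{-}$. Balancing probability fluxes over $[t,t+\dif t]$ in the style of (\ref{sistemaStandard}) yields
\begin{equation*}
\begin{cases}
\frac{\partial f_{+}}{\partial t}=-c\frac{\partial f_{+}}{\partial x}-\lambda(t)f_{+}+\tfrac{\lambda(t)}{2}f_{0},\\[4pt]
\frac{\partial f_{0}}{\partial t}=\lambda(t)(f_{+}+f_{-})-\lambda(t)f_{0},\\[4pt]
\frac{\partial f_{-}}{\partial t}=+c\frac{\partial f_{-}}{\partial x}-\lambda(t)f_{-}+\tfrac{\lambda(t)}{2}f_{0}.
\end{cases}
\end{equation*}
Setting $s=f_{+}+f_{-}$ and $d=f_{+}-f_{-}$, so that $p=s+f_{0}$, the sum of the equation for $s$ with the one for $f_{0}$ makes the rate terms cancel and produces the compact continuity relation $\partial_{t}p=-c\,\partial_{x}d$. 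Differentiating this once more in $t$ and inserting the equation for $d$ gives $\partial_{t}^{2}p+\lambda\,\partial_{t}p=c^{2}\partial_{x}^{2}s=c^{2}\partial_{x}^{2}(p-f_{0})$, whence
\[
Lp\;:=\;\frac{\partial^{2}p}{\partial t^{2}}+\lambda\frac{\partial p}{\partial t}-c^{2}\frac{\partial^{2}p}{\partial x^{2}}\;=\;-c^{2}\frac{\partial^{2}f_{0}}{\partial x^{2}}.
\]

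The final step is to eliminate $f_{0}$. Rewriting the $f_{0}$-equation as $\partial_{t}f_{0}=\lambda(p-2f_{0})$ and differentiating $Lp=-c^{2}\partial_{x}^{2}f_{0}$ in $t$ one obtains $\partial_{t}(Lp)=-c^{2}\lambda\,\partial_{x}^{2}p+2c^{2}\lambda\,\partial_{x}^{2}f_{0}=-c^{2}\lambda\,\partial_{x}^{2}p-2\lambda\,Lp$, i.e.
\[
\partial_{t}(Lp)+2\lambda\,Lp+c^{2}\lambda\,\partial_{x}^{2}p=0.
\]
Expanding $\partial_{t}(Lp)$ and $2\lambda Lp$ and collecting terms reproduces precisely (\ref{equazioneLeggeMarginaleStandard}). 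The main obstacle (and the only non-routine check) is bookkeeping the $\lambda'(t)$ contribution that arises when $\partial_{t}$ falls on the non-autonomous coefficient of $L$, and verifying that it assembles correctly into the $(2\lambda^{2}+\lambda')\partial_{t}p$ coefficient on the left-hand side, while the $-2c^{2}\lambda\,\partial_{x}^{2}p$ term from $2\lambda Lp$ combines with $-c^{2}\lambda\,\partial_{x}^{2}p$ to match $c^{2}\lambda\,\partial_{x}^{2}p$ on the right.

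For the atoms (\ref{singolaritaMarginale}) I would argue pathwise. The event $\{\mathcal{T}(t)=\pm ct\}$ forces $V(s)\equiv\pm c$ throughout $[0,t]$, and hence is equivalent to $\{V_{0}=\pm c\}\cap\{N(t)=0\}$; by independence of $V_{0}$ and $N$ this carries mass $\tfrac14 e^{-\Lambda(t)}$. For $\{\mathcal{T}(t)=0\}$, any trajectory that ever attains a velocity $\pm c$ accumulates signed displacements over a finite collection of sojourn intervals, and exact cancellation would impose an algebraic relation among these sojourn lengths, which are continuously distributed; such an event has probability zero. Therefore, up to a null set, $\{\mathcal{T}(t)=0\}=\{V_{0}=0,\,N(t)=0\}$, with probability $\tfrac12 e^{-\Lambda(t)}$, establishing (\ref{singolaritaMarginale}).
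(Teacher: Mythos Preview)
Your argument is correct and follows essentially the same route as the paper: you set up the forward system for $(f_{+},f_{0},f_{-})$, pass to the sum and difference $s=f_{+}+f_{-}$, $d=f_{+}-f_{-}$, and then eliminate the remaining auxiliary unknown to obtain the third-order equation; the paper does the same, only introducing $w=s-f_{0}$ in place of your $f_{0}$, which is a trivial change of variables. Your treatment of the atom at $0$ is in fact slightly more careful than the paper's one-line remark, since you explicitly argue that the cancellation event has probability zero.
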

We say that $\mathcal{T}$ has velocity $c$ and rate function $\lambda(t)$ (both positive).

\begin{proof}
Let $v_0\in\{-c,0,c\}$, then $P\{\mathcal{T}(t) = v_0t\} = P\{V_0 = v_0,\ N(t) = 0\}$ and this proves (\ref{singolaritaMarginale}).

To prove (\ref{equazioneLeggeMarginaleStandard}) we use the following probability functions
\begin{equation}
\begin{split}\label{leggiMotoUnidimensionaleCongiuntoV}
f_1(x,t)\dif x=P\{\mathcal{T}&(t)\in \dif x,\,V(t) = c\},\ \ \ f_2(x,t)\dif x =P\{\mathcal{T}(t)\in \dif x,\, V(t) = -c\},\\
& f_0(x,t)\dif x=P\{\mathcal{T}(t)\in \dif x, \,V(t)=0\}.
\end{split}\end{equation}
The probabilities (\ref{leggiMotoUnidimensionaleCongiuntoV}) are related by the following differential system of differential equations
\begin{equation}\label{sistemaMotoUnidimensionaleCongiuntoV}
\begin{cases}
\frac{\partial f_1}{\partial t} = -c\frac{\partial f_1}{\partial x}+\frac{\lambda(t)}{2}(f_0-2f_1)\\
\frac{\partial f_0}{\partial t} =\lambda(t)(f_1+f_2-f_0)\\
\frac{\partial f_2}{\partial t} = c\frac{\partial f_2}{\partial x}+\frac{\lambda(t)}{2}(f_0-2f_2)
\end{cases} \text{and then}\ \ \ \begin{cases}
\frac{\partial g_1}{\partial t} = -c\frac{\partial g_2}{\partial x}+\lambda(t) (f_0-g_1)\\
\frac{\partial g_2}{\partial t} = -c\frac{\partial g_1}{\partial x}-\lambda(t) g_2\\
\frac{\partial f_0}{\partial t} = \lambda(t)(g_1-f_0)
\end{cases}\end{equation}
where we simplified the first differential system by means of the auxiliary functions $g_1 = f_1+f_2,\ g_2 = f_1-f_2$. By suitably using the equations of the second system of (\ref{sistemaMotoUnidimensionaleCongiuntoV}) we pass to the differential system
\begin{equation}\label{sistemaMotoUnidimensionaleG2}
\begin{cases}
\frac{\partial^2 g_1}{\partial t^2} = c^2\frac{\partial^2 g_1}{\partial x^2}- \lambda(t)\frac{\partial g_1}{\partial t}+\lambda^2(t) (f_0-g_1)+\frac{\partial }{\partial t}\bigl( \lambda(t) (f_0-g_1)\bigr),\\
\frac{\partial f_0}{\partial t} = \lambda(t)(g_1-f_0).
\end{cases}
\end{equation}
By deriving the second equation with respect to $x$ and by considering the functions $p= g_1+f_0$ (this is the probability density of the motion) and $w = g_1-f_0$, after some calculation, we obtain
\begin{equation}\label{sistemaMotoUnidimensionaleP}
\begin{cases}
2\frac{\partial^2 p}{\partial t^2} = c^2\frac{\partial^2 p}{\partial x^2}+c^2\frac{\partial^2 w}{\partial x^2}- \lambda(t)\frac{\partial p}{\partial t}- \lambda(t)\frac{\partial w}{\partial t}-2\lambda^2(t)w =  c^2\frac{\partial^2 p}{\partial x^2}+c^2\frac{\partial^2 w}{\partial x^2}- 2\lambda(t)\frac{\partial p}{\partial t}, \\
\frac{\partial p}{\partial t} = \frac{\partial w}{\partial t}+2\lambda(t)w.
\end{cases}
\end{equation}
Finally, by deriving twice with respect to $t$ the second equation of (\ref{sistemaMotoUnidimensionaleP}) and substituting $\frac{\partial^2 w}{\partial x^2}$ from the first equation, we obtain the third order differential equation (\ref{equazioneLeggeMarginaleStandard}).
\end{proof}

\begin{theorem}\label{teoremaSommaTelegrafi}
Let $\{U(t)\}_{t\ge0}$, $\{V(t)\}_{t\ge0}$ be two independent one-dimensional telegraph processes with parameters $(c/2,\lambda(t)/2)$, with $\lambda(t)\in C^1\bigl((0,\infty),[0,\infty)\bigl)$. The process $X(t) = U(t)+V(t)$, $t\ge0$, is equal in distribution to the one-dimensional process $\mathcal{T}(t),\ t\ge0$, defined in Theorem \ref{teoremaMotoConStop}.
\end{theorem}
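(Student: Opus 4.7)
The strategy I would take is probabilistic rather than analytic: identify the velocity process of $X(t)=U(t)+V(t)$ with the velocity process $V(\cdot)$ described in Theorem \ref{teoremaMotoConStop}, and then integrate. Since the processes $U$ and $V$ are independent symmetric telegraph processes with velocity $c/2$ and rate $\lambda(t)/2$, their underlying velocities $V_U(t),V_V(t)\in\{\pm c/2\}$ switch sign at the events of independent non-homogeneous Poisson processes of rate $\lambda(t)/2$. By symmetry, $V_U(0)$ and $V_V(0)$ are independent and uniform on $\{\pm c/2\}$, so $V_X(0):=V_U(0)+V_V(0)$ equals $\pm c$ with probability $1/4$ each and $0$ with probability $1/2$, matching the initial distribution of $V_0$.

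Next I would analyze the transitions. The superposition of the two Poisson processes has rate $\lambda(t)$, and at each event exactly one of $V_U,V_V$ flips sign. I would enumerate the three cases: when $V_X=+c$, necessarily $(V_U,V_V)=(c/2,c/2)$, so any flip sends $V_X$ to $0$ at total rate $\lambda(t)$; symmetrically for $V_X=-c$; when $V_X=0$, the underlying state is either $(c/2,-c/2)$ or $(-c/2,c/2)$, and in each sub-case a flip in $U$ or in $V$ sends $V_X$ to either $+c$ or $-c$ at respective rates $\lambda(t)/2$. This is precisely the transition mechanism prescribed for $V(t)$ in Theorem \ref{teoremaMotoConStop}.

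The subtle point, which I expect to be the main obstacle to state cleanly, is the Markov property of the projected velocity $V_X$. Although the pair $(V_U,V_V)$ is Markov and its coordinate-projection generally need not be, here the two sub-states producing $V_X=0$ transition to $\pm c$ with identical rates, so the projection is itself Markov on $\{-c,0,c\}$ with the rates above; therefore $V_X$ and $V$ of Theorem \ref{teoremaMotoConStop} agree in law as processes. Since $X(t)=\int_0^t V_X(s)\dif s$ and $\mathcal{T}(t)=\int_0^t V(s)\dif s$, this yields $X(t)\stackrel{d}{=}\mathcal{T}(t)$ for every $t\ge0$.

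As a consistency check for the singular component when $\Lambda(t)<\infty$, the events $\{U(t)=\pm ct/2\}$ and $\{V(t)=\pm ct/2\}$ each have probability $e^{-\Lambda(t)/2}/2$, so by independence $P\{U(t)+V(t)=\pm ct\}=e^{-\Lambda(t)}/4$ and $P\{U(t)+V(t)=0\}=2\cdot(e^{-\Lambda(t)/2}/2)^2=e^{-\Lambda(t)}/2$, matching (\ref{singolaritaMarginale}). A purely analytic alternative would be to verify that the convolution $p_X=p_U*p_V$ satisfies (\ref{equazioneLeggeMarginaleStandard}) using the telegraph equation for each factor; this works but is noticeably more laborious than the velocity-process argument above, which is why I would prefer the probabilistic route.
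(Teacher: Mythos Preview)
Your argument is correct and takes a genuinely different route from the paper. The paper proceeds analytically: it writes the characteristic function $H_X(\gamma,t)=H_U(\gamma,t)^2$, uses that $H_U$ satisfies the Fourier-transformed telegraph equation $\partial_t^2 H_U+\lambda(t)\partial_t H_U=-\tfrac{\gamma^2c^2}{4}H_U$, and by differentiating $H_X$ three times in $t$ shows that $H_X$ satisfies the transform of (\ref{equazioneLeggeMarginaleStandard}); the singular masses are checked separately, exactly as you do. Your approach instead identifies the velocity process $V_X=V_U+V_V$ directly with the three-state chain of Theorem~\ref{teoremaMotoConStop} via a lumpability argument, which is both shorter and yields the stronger conclusion that $X$ and $\mathcal{T}$ agree in law \emph{as processes}, not merely at each fixed time. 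The paper's analytic route, on the other hand, stays within the PDE framework used throughout the article and avoids any discussion of Markov projections; its cost is an implicit appeal to uniqueness for (\ref{equazioneLeggeMarginaleStandard}) to pass from ``same PDE and same singular part'' to ``same distribution'', which your argument does not need.
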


\begin{proof}
If $\int_0^t\lambda(s)\dif s<\infty$, it is straightforward to show that the probability mass of the discrete component of $X(t)=U(t)+V(t)$ coincides with (\ref{singolaritaMarginale}).

We now show that the absolutely continuous component of $X(t)$ satisfies the third-order partial differential equation (\ref{equazioneLeggeMarginaleStandard}).
$p_U(u,t)\dif u = P\{U(t)\in \dif u\}$ and $p_U(v,t)\dif v = P\{V(t)\in \dif v\}$ are both solutions of (\ref{CauchyTelegrafo}). By writing $H_X(\gamma, t) =\mathbb{E}\bigl[e^{i\gamma X(t)}\bigr]$ and $H_U(\gamma, t)=\mathbb{E}\bigl[e^{i\gamma U(t)}\bigr]$, $\gamma \in \mathbb{R}$, we readily obtain
\[  H_X(\gamma, t) = H_U^2(\gamma, t)\ \ \ \text{and}\ \ \ \frac{\partial^2 H_U}{\partial t^2}+ \lambda(t)\frac{\partial H_U}{\partial t} = -\frac{\gamma^2c^2}{4}H_U.\]
Thus,
\begin{equation*}\label{}
\frac{\partial H_X}{\partial t}=2H_U(\gamma, t)\frac{\partial H_U}{\partial t},\ \ \ \frac{\partial^2 H_X}{\partial t^2}=2 \Bigl(\frac{\partial H_U}{\partial t}\Bigr)^2-\lambda(t)\frac{\partial H_X}{\partial t}  -\frac{\gamma^2c^2}{2}H_X
\end{equation*}
and 
\begin{align*}
\frac{\partial^3 H_X}{\partial t^3}&=4\frac{\partial H_U}{\partial t}\frac{\partial^2 H_U}{\partial t^2}-\lambda'(t)\frac{\partial H_X}{\partial t}-\lambda(t)\frac{\partial^2 H_X}{\partial t^2}-\frac{\gamma^2c^2}{2}\frac{\partial H_X}{\partial t}\nonumber\\
&=-4\lambda(t) \Bigl(\frac{\partial H_U}{\partial t}\Bigr)^2-\gamma^2c^2H_U\frac{\partial H_U}{\partial t}-\lambda'(t)\frac{\partial H_X}{\partial t}-\lambda(t)\frac{\partial^2 H_X}{\partial t^2}-\frac{\gamma^2c^2}{2}\frac{\partial H_X}{\partial t} \nonumber\\
&=-2\lambda(t)\Bigl(\frac{\partial^2 H_X}{\partial t^2}+ \lambda(t)\frac{\partial H_X}{\partial t}  +\frac{\gamma^2c^2}{2}H_X\Bigr)-\gamma^2c^2\frac{\partial H_X}{\partial t} -\lambda'(t)\frac{\partial H_X}{\partial t}-\lambda(t)\frac{\partial^2 H_X}{\partial t^2}\nonumber\\
&=-3\lambda(t)\frac{\partial^2 H_X}{\partial t^2}-\bigl(2\lambda^2(t)+\lambda'(t)\bigr)\frac{\partial H_X}{\partial t}-\gamma^2c^2\frac{\partial H_X}{\partial t}-\lambda(t)\gamma^2c^2H_X
\end{align*}
and the inverse Fourier transform straightforwardly yields equation (\ref{equazioneLeggeMarginaleStandard}).
\end{proof}

By taking into account representation (\ref{decomposizioneXYUV}) the next statement follows as a consequence of the previous theorem.

\begin{corollary}
Let $\{\bigl(X(t),Y(t)\bigr)\}_{t\ge0}$ be a standard orthogonal planar motion with rate function $\lambda(t)\in C^2\bigl((0,\infty),[0,\infty)\bigl)$. The marginal processes $X(t)$ and $Y(t)$ are equal in distribution to the one-dimensional process $\mathcal{T}(t),\ t\ge0$, defined in Theorem \ref{teoremaMotoConStop}.
\end{corollary}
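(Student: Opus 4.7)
The proof is essentially a two-line chaining of the two previous theorems, so the plan is to set up the invocations carefully rather than redo any analysis. First I would apply Theorem \ref{teoremaDecomposizioneXYUV} to fix a representation
\[
X(t) = U(t) + V(t), \qquad Y(t) = U(t) - V(t),
\]
where $\{U(t)\}_{t\ge 0}$ and $\{V(t)\}_{t\ge 0}$ are independent one-dimensional telegraph processes with parameters $(c/2,\lambda(t)/2)$. The hypothesis $\lambda \in C^2\bigl((0,\infty),[0,\infty)\bigr)$ is exactly the regularity required by that theorem, and it implies the weaker $C^1$ hypothesis needed by Theorem \ref{teoremaSommaTelegrafi}.

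For the marginal $X(t)$ the conclusion is immediate: $X(t)$ is by construction the sum of two independent telegraph processes with parameters $(c/2,\lambda(t)/2)$, hence by Theorem \ref{teoremaSommaTelegrafi} it is equal in distribution to $\mathcal{T}(t)$.

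For $Y(t)$ the only extra point to note is the symmetry of the one-dimensional telegraph process. Writing $\tilde V(t) = -V(t)$, the process $\tilde V$ has the same initial distribution (the initial direction is uniform on $\{\pm c/2\}$) and is driven by the same Poisson process with the same switching rule, so $\tilde V \stackrel{d}{=} V$ as processes. Independence of $U$ and $V$ transfers to independence of $U$ and $\tilde V$, and therefore
\[
Y(t) = U(t) + \tilde V(t) \stackrel{d}{=} U(t) + V(t),
\]
so that one can apply Theorem \ref{teoremaSommaTelegrafi} again and conclude $Y(t) \stackrel{d}{=} \mathcal{T}(t)$.

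There is essentially no obstacle, since all the real content is contained in Theorems \ref{teoremaDecomposizioneXYUV} and \ref{teoremaSommaTelegrafi}; the only point worth writing explicitly is the symmetry argument $-V(t) \stackrel{d}{=} V(t)$ needed to reduce the difference to a sum. If one prefers, the same conclusion can be obtained without the symmetry remark by arguing at the level of characteristic functions, observing that in the proof of Theorem \ref{teoremaSommaTelegrafi} only $H_V^2$ enters and $H_{-V} = H_V(-\gamma, t) = H_V(\gamma, t)$ by evenness of the telegraph law, so $H_Y = H_U H_{-V} = H_U H_V = H_X$.
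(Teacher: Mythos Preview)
Your proof is correct and follows the same route as the paper, which simply notes that the corollary is an immediate consequence of the representation $X(t)=U(t)+V(t),\ Y(t)=U(t)-V(t)$ from Theorem~\ref{teoremaDecomposizioneXYUV} combined with Theorem~\ref{teoremaSommaTelegrafi}. Your explicit symmetry argument $-V\stackrel{d}{=}V$ for handling $Y(t)$ is a detail the paper leaves implicit, but it is the natural way to fill that gap.
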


By suitably manipulating (\ref{equazioneLeggeMarginaleStandard}), we obtain that the absolutely continuous component of the distribution of the marginal component of the standard orthogonal motion $X(t) = U(t)+V(t)$ and $Y(t) = U(t)-V(t),\ t\ge0,$ satisfies
\begin{equation}\label{equazioneLeggeMarginaleStandardCostante}
\Bigl(\frac{\partial }{\partial t} +\lambda(t) \Bigr) \Bigl(\frac{\partial^2}{\partial t^2}+2\lambda(t)\frac{\partial}{\partial t}-c^2\frac{\partial^2}{\partial x^2} \Bigr)p=\lambda'(t)\frac{\partial p}{\partial t}.
\end{equation}
Equation (\ref{equazioneLeggeMarginaleStandardCostante}) reduces to formula (4.7) of Kolesnik \cite{K2014} if $\lambda(t) = \lambda \ \forall\ t$ (we recall that we are considering $U$ and $V$ independent telegraph processes with velocity $c/2$ and rate function $\lambda(t)/2$).

\begin{remark}[Asymmetric motion]
By keeping in mind formula (\ref{decomposizioneXYUVasimmetrica}), thanks to Theorem \ref{teoremaSommaTelegrafi}, we obtain that the marginal components $X(t)$ and $Y(t)$, $t\ge0$, of the asymmetric standard planar motion are equal in distribution to the process $\mathcal{T}(t),\ t\ge0$, defined in Theorem \ref{teoremaMotoConStop}, with velocities $c_X$ and $c_Y$ respectively. \hfill$\diamond$
\end{remark}

\section{Reflecting orthogonal planar random motions}\label{Sezione3}

We consider $\{\bigl(X(t),Y(t)\bigr)\}_{t\ge0}$ a \textit{reflecting orthogonal planar random motion} with changes of direction paced by a non-homogeneous Poisson process $\{N(t)\}_{t\ge0}$ with rate $\lambda(t)\in C^1\bigl((0,\infty),[0,\infty)\bigl)$. $\bigl(X(t),Y(t)\bigr)$ is the stochastic vector process describing the position, at time $t$, of a particle moving with the rules of the standard orthogonal motion (see above), but with the possibility to bounce back. Therefore, at all Poisson times it can uniformly switch to one of the available directions, except for the current one. This motion has been studied by Kolesnik and Orsingher \cite{KO2001} in the case of a constant rate function $\lambda(t) = \lambda>0\ \forall \ t$.
\\

At time $t\ge0$, the reflecting orthogonal motion is located in the square $S_{ct}$ defined in (\ref{supportoQuadrato}). If the rate function  is such that $\Lambda(t) = \int_0^t\lambda(s) \dif s<\infty$, the distribution of the motion has two singular components, the border of the square $S_{ct}$ and its diagonals. Let $V_{ct} = \{(0,\pm ct),(\pm ct,0)\}$ being the set of the vertices of $S_{ct}$ and $\partial D_{ct} = \{(x,y)\in S_{ct}\,:\, x=0\ \text{or}\ y=0\}$ being the diagonals of $S_{ct}$, then $P\{\bigl(X(t),Y(t)\bigr) \in V_{ct}\}=e^{-\Lambda(t)}$,
\begin{equation}\label{riflessioneProbFrontiera}
P\{\bigl(X(t),Y(t)\bigr) \in \partial S_{ct}\setminus V_{ct}\}=\frac{2}{3}\sum_{n=1}^\infty P\{N(t) = n\}\frac{1}{3^{n-1}} = 2\Bigl(e^{-\frac{2\Lambda(t)}{3}}-e^{-\Lambda(t)}\Bigr),
\end{equation}
because, in order to reach the edge $\partial S_{ct}$, the particle can not bounce back and it must always move towards the ``selected'' side of the border. Finally,
\begin{equation}\label{riflessioneProbCroce}
P\{\bigl(X(t),Y(t)\bigr) \in \partial D_{ct}\setminus V_{ct}\}=\sum_{n=1}^\infty P\{N(t) = n\}\frac{1}{3^{n}} =  e^{-\frac{2\Lambda(t)}{3}}-e^{-\Lambda(t)},
\end{equation}
because the particle must always move back and forth.

We now present a general result concerning the probability density inside the square $S_{ct}$, that is $p(x,y,t)\dif x\dif y = P\{X(t)\in \dif x,\ Y(t)\in \dif y\}$, for $(x,y)\in S_{ct}$.

\begin{theorem}
\label{teoremaEquazioneRiflessione}
The absolutely continuous component $p = p(x,y,t)\in C^4\bigl(\mathbb{R}^2\times[0,\infty), [0,\infty)\bigr)$ of the distribution of the reflecting orthogonal process $\{\bigl(X(t),Y(t)\bigr)\}_{t\ge0}$ satisfies the following fourth-order differential system
\begin{equation}\label{equazioneRiflessione}
\begin{cases}

\frac{\partial^4p}{\partial t^4} + 4\lambda \frac{\partial^3p}{\partial t^3}+\Bigl(\frac{16}{3}\lambda^2+4\lambda'\Bigr)\frac{\partial^2p}{\partial t^2}+ \Bigl(\frac{64}{27}\lambda^3+\frac{16}{3}\lambda\lambda'+\frac{4}{3}\lambda''\Bigr)\frac{\partial p}{\partial t} + c^4\frac{\partial^4 p}{\partial x^2\partial y^2}\\
\hspace{5cm}=c^2\Bigl(\frac{\partial^2}{\partial x^2}+\frac{\partial^2}{\partial y^2}\Bigr)\Bigl(\frac{\partial^2p}{\partial t^2}+2\lambda\frac{\partial p}{\partial t} + \bigl(\frac{8}{9}\lambda^2+\frac{2}{3}\lambda'\bigr)p\Bigr),\\
p(x,y,t)\ge0,\\
\int_{S_{ct}}p(x,y,t)\dif x\dif y = 1- 3e^{-\frac{2}{3}\int_0^t\lambda(s)\dif s}+2e^{-\int_0^t\lambda(s)\dif s},
\end{cases}\end{equation}
where $\lambda=\lambda(t)\in C^{2}\bigl((0,\infty),[0,\infty)\bigl)$ denotes the rate function of the non-homogeneous Poisson process governing the changes of direction.
\end{theorem}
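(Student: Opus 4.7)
My plan is to model the proof on that of Theorem \ref{teoremaEquazioneStandard}, adapted to the reflecting case. I would first introduce $f_k(x,y,t)\dif x\dif y = P\{X(t)\in \dif x, Y(t)\in \dif y, D(t) = d_k\}$, $k=0,1,2,3$, and observe that, with reflection, at each Poisson event the particle currently moving along $d_k$ uniformly switches to one of the three directions $d_{k-1}, d_{k+1}, d_{k+2}$. An infinitesimal balance analogous to that in the proof of Theorem \ref{teoremaEquazioneStandard} then yields the first-order system
\begin{equation*}
\frac{\partial f_0}{\partial t} = -c\frac{\partial f_0}{\partial x} + \frac{\lambda(t)}{3}\bigl(f_1+f_2+f_3 - 3f_0\bigr),
\end{equation*}
and its analogues for $f_1,f_2,f_3$ with the suitable transport term.

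I would then apply the same change of variables as in (\ref{sistemaStandard}): $g_1=f_0+f_2,\ g_2=f_0-f_2,\ g_3=f_1+f_3,\ g_4=f_1-f_3$. The system uncouples into two identical blocks in $(g_1,g_2)$ and $(g_3,g_4)$; the key difference with respect to the standard case is that the self-damping coefficient in the $g_2$ (and $g_4$) equation becomes $\tfrac{4\lambda}{3}$ instead of $\lambda$, because reflection now also contributes to the loss of the antisymmetric variable. Eliminating $g_2,g_4$ produces a second-order system for $g_1,g_3$, and letting $p = g_1+g_3$, $w=g_1-g_3$ leads to the coupled system
\begin{equation*}
\frac{\partial^2 p}{\partial t^2} = Fp + \Delta^- w,\qquad \frac{\partial^2 w}{\partial t^2} = \Delta^- p + Gw,
\end{equation*}
with $\Delta^\pm = \tfrac{c^2}{2}\bigl(\tfrac{\partial^2}{\partial x^2}\pm\tfrac{\partial^2}{\partial y^2}\bigr)$, $F = \Delta^+ - \tfrac{4\lambda}{3}\tfrac{\partial}{\partial t}$, and $G = \Delta^+ - \tfrac{8\lambda}{3}\tfrac{\partial}{\partial t} - \bigl(\tfrac{16\lambda^2}{9}+\tfrac{4\lambda'}{3}\bigr)$.

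Following the elimination scheme of the standard case, I would then compute $\tfrac{\partial^4 p}{\partial t^4} = \tfrac{\partial^2}{\partial t^2}(Fp) + \Delta^- \tfrac{\partial^2 w}{\partial t^2}$, substitute the $w$-equation, and use $\Delta^- w = \tfrac{\partial^2 p}{\partial t^2} - Fp$ to eliminate the remaining occurrences of $w$. Spatial operators commute with everything, but $\tfrac{\partial^2}{\partial t^2} F\neq F\tfrac{\partial^2}{\partial t^2}$ because of the time-dependent coefficient, so the $\lambda'$ and $\lambda''$ contributions must be retained. Collecting terms via the identities $2\Delta^+ = c^2(\partial_x^2+\partial_y^2)$ and $(\Delta^+)^2-(\Delta^-)^2 = c^4\partial_x^2\partial_y^2$ should produce the claimed fourth-order equation. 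The integral condition is immediate from (\ref{riflessioneProbFrontiera})--(\ref{riflessioneProbCroce}) together with $P\{(X(t),Y(t))\in V_{ct}\}=e^{-\Lambda(t)}$, whose sum equals $3e^{-2\Lambda(t)/3}-2e^{-\Lambda(t)}$.

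The main obstacle will be the algebraic bookkeeping at the elimination step: since $G$ already carries the quadratic coefficient $\tfrac{16\lambda^2}{9}+\tfrac{4\lambda'}{3}$, after the final substitution cubic products $\lambda^3$ and mixed terms $\lambda\lambda'$, $\lambda\lambda''$ emerge, and one must verify that they collect precisely into the stated coefficients $\tfrac{16}{3}\lambda^2+4\lambda'$ of $\partial_t^2 p$, $\tfrac{64}{27}\lambda^3+\tfrac{16}{3}\lambda\lambda'+\tfrac{4}{3}\lambda''$ of $\partial_t p$, and $\tfrac{8}{9}\lambda^2+\tfrac{2}{3}\lambda'$ inside the parenthesis on the right-hand side of (\ref{equazioneRiflessione}).
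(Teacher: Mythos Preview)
Your proposal is correct and follows precisely the route the paper itself takes: the paper's proof simply writes down the first-order system (\ref{sistemaRiflessione}) for the $f_k$ and then states that ``the claimed result follows by proceeding as in Section \ref{sottoSezioneEquazioneStandard} in order to prove Theorem \ref{teoremaEquazioneStandard}'', which is exactly the elimination scheme via $g_j$, then $p,w$, that you outline. Your identification of the modified operators $F=\Delta^+-\tfrac{4\lambda}{3}\partial_t$ and $G=\Delta^+-\tfrac{8\lambda}{3}\partial_t-\bigl(\tfrac{16\lambda^2}{9}+\tfrac{4\lambda'}{3}\bigr)$ is correct (one minor wording issue: the $(g_1,g_2)$ and $(g_3,g_4)$ blocks are not literally uncoupled, since $g_3$ appears in the $g_1$-equation and vice versa, but the elimination of $g_2$ and $g_4$ does proceed blockwise as you describe).
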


\begin{proof}
By means of notation (\ref{notazione}), we obtain the differential system
\begin{equation}\label{sistemaRiflessione}
\begin{cases}
\frac{\partial f_0}{\partial t} = -c\frac{\partial f_0}{\partial x}+\frac{\lambda(t)}{3}(f_1+f_2+f_3-3f_0),\\
\frac{\partial f_1}{\partial t} = -c\frac{\partial f_1}{\partial y}+\frac{\lambda(t)}{3}(f_0+f_2+f_3-3f_1),\\
\frac{\partial f_2}{\partial t} = c\frac{\partial f_2}{\partial x}+\frac{\lambda(t)}{3}(f_0+f_1+f_3-3f_2),\\
\frac{\partial f_3}{\partial t} = c\frac{\partial f_3}{\partial y}+\frac{\lambda(t)}{3}(f_0+f_1+f_2-3f_3).
\end{cases}\end{equation}
The claimed result follows by proceeding as in Section \ref{sottoSezioneEquazioneStandard} in order to proof Theorem \ref{teoremaEquazioneStandard}.
\end{proof}

By dividing the differential equation (\ref{equazioneRiflessione}) by $\lambda^3$, it is easy to observe that the reflecting orthogonal planar motion converges to Brownian motion with diffusivity $3\sigma^2/4$ under Kac's conditions.
\\

We now consider $\int_0^t\lambda(s)\dif s<\infty$ and we study the density on the singular components, i.e. on the border and the diagonals of the square $S_{ct}$.

\begin{proposition}\label{proposizioneFrontieraRiflessione}
Let $\{\bigl(X(t),Y(t)\bigr)\}_{t\ge0}$ be a reflecting orthogonal planar random motion with changes of direction paced by a Poisson process with rate $\lambda(t)\in C^1\bigl((0,\infty),[0,\infty)\bigr)$ such that $\Lambda(t) = \int_0^t\lambda(s)\dif s<\infty$ for all $t>0$. Then, $f(\eta, t)\dif \eta =P\{X(t)+Y(t) = ct, X(t)-Y(t)\in \dif \eta\}$, for $|\eta|<ct$, satisfies the differential problem
\begin{equation}\label{sistemaFrontieraAltoDxRiflessione}
\begin{cases}
\frac{\partial^2 f}{\partial t^2} +2\lambda(t)\frac{\partial f}{\partial t} +\frac{2}{3}\Bigl(\frac{4}{3}\lambda(t)^2+\lambda'(t)\Bigr)f =c^2\frac{\partial^2 f}{\partial \eta^2},\\
f(\eta,t)\ge0,\\
\int_{-ct}^{ct}f(\eta,t)\dif \eta = \frac{1}{2}(e^{-2\Lambda(t)/3}-e^{-\Lambda(t)}).
\end{cases}
\end{equation}
The function
\begin{equation} \label{formulaGeneraleFrontieraRiflessione}
f(\eta, t) = \frac{e^{-2\Lambda(t)/3}}{4} \bar p\Bigl(\frac{\eta}{2},t\Bigr) ,
\end{equation}
 solves (\ref{sistemaFrontieraAltoDxRiflessione}). In (\ref{formulaGeneraleFrontieraRiflessione}), $\bar p(x,t)$ is the absolutely continuous component of the distribution of a telegraph process with rate function $\lambda(t)/3$ and velocity $c/2$.
\end{proposition}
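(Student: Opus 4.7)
The plan is to mirror the derivation of Proposition \ref{propFrontieraStandard}, adapting the transition probabilities to the reflecting dynamics. Since $\partial S_{ct}$ expands outward at speed $c$, equal to the particle's maximum signed speed along the axes, any trajectory ending in $\partial F_{ct}$ at time $t$ must have used only the directions $d_0$ and $d_1$ throughout $[0,t]$. I would thus introduce
\[
f_k(\eta,t)\dif\eta=P\{X(t)+Y(t)=ct,\,X(t)-Y(t)\in\dif\eta,\,D(t)=d_k\},\quad k=0,1,
\]
and write the forward infinitesimal equations as in the standard case. The crucial difference is that at each Poisson event the reflecting particle uniformly picks one of the three \emph{other} directions, so the $d_1\to d_0$ transition rate is $\lambda(t)/3$ while the total rate of leaving $d_0$ is $\lambda(t)$. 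Passing to $f=f_0+f_1$ and $w=f_0-f_1$ produces the reduced first-order system
\[
\begin{cases}
\partial_t f=-c\,\partial_\eta w-\tfrac{2}{3}\lambda(t)\,f,\\[2pt]
\partial_t w=-c\,\partial_\eta f-\tfrac{4}{3}\lambda(t)\,w,
\end{cases}
\]
and elimination of $w$ (time-differentiating the first equation and substituting the second) yields the PDE in \eqref{sistemaFrontieraAltoDxRiflessione}. The integral condition then follows immediately from \eqref{riflessioneProbFrontiera} together with the symmetry across the four sides of $\partial S_{ct}\setminus V_{ct}$.

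For the explicit representation \eqref{formulaGeneraleFrontieraRiflessione}, I would write $f(\eta,t)=\phi(t)\,\bar p(\eta/2,t)$ with $\phi(t)=\tfrac{1}{4}e^{-2\Lambda(t)/3}$ and substitute into the PDE. Using $\phi'=-\tfrac{2}{3}\lambda\phi$, $\phi''=(\tfrac{4}{9}\lambda^2-\tfrac{2}{3}\lambda')\phi$, together with the telegraph equation
\[
\partial_{tt}\bar p+\tfrac{2}{3}\lambda(t)\,\partial_t\bar p=\tfrac{c^2}{4}\partial_{xx}\bar p
\]
satisfied by $\bar p$ (see \eqref{equazioneTelegrafoIntroduzione} with rate $\lambda/3$ and velocity $c/2$), I would organise the left-hand side as $A(t)\,\bar p+B(t)\,\partial_t\bar p+\phi\,\partial_{tt}\bar p$ and verify that $A\equiv 0$ and $B\equiv\tfrac{2}{3}\lambda\phi$. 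The left-hand side then reduces to $\phi\bigl(\partial_{tt}\bar p+\tfrac{2}{3}\lambda\,\partial_t\bar p\bigr)=\tfrac{c^2}{4}\phi\,\partial_{xx}\bar p=c^2\partial_\eta^2 f$, confirming the PDE. For the normalization, the telegraph process with rate $\lambda/3$ and velocity $c/2$ carries singular mass $e^{-\Lambda(t)/3}$ on $\{\pm ct/2\}$, hence $\int_{-ct/2}^{ct/2}\bar p(x,t)\dif x=1-e^{-\Lambda(t)/3}$; after the Jacobian of $\eta=2x$ this produces exactly $\tfrac{1}{2}(e^{-2\Lambda(t)/3}-e^{-\Lambda(t)})$.

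The main obstacle I anticipate is the algebraic bookkeeping in the substitution step: one must check that the contributions in $\lambda^2$ and $\lambda'$ cancel to give $A\equiv 0$, and that the coefficient of $\partial_t\bar p$ reduces \emph{exactly} to $\tfrac{2}{3}\lambda\phi$, without which the telegraph equation cannot close the identity. Once this careful cancellation is in place, everything else is a direct adaptation of Proposition \ref{propFrontieraStandard}.
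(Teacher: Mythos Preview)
Your proposal is correct and follows essentially the same route as the paper: the same first-order system for $(f,w)$ is derived from the infinitesimal equations for $f_0,f_1$, and the verification of \eqref{formulaGeneraleFrontieraRiflessione} is done by substitution into the PDE using the telegraph equation with rate $2\lambda/3$. The only cosmetic difference is that the paper rewrites $f(\eta,t)=e^{-2\Lambda(t)/3}q(\eta,t)$ with $q(\eta,t)=\tfrac{1}{2}\bar p(\eta/2,t)$ the density of a telegraph process with velocity $c$ (so that the spatial term is directly $c^2\partial_{\eta\eta}q$), whereas you work with $\bar p$ and absorb the chain-rule factor $1/4$ at the end; the algebra and the normalization check are otherwise identical.
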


The proposition refers to the probability that the motion lies on the border of its support in the first quadrant, defined in  (\ref{insiemeFrentieraAltoDx}). Equivalent results hold for the other three sides of $\partial S_{ct}$.

\begin{proof}
In order to obtain the differential equation in (\ref{sistemaFrontieraAltoDxRiflessione}) we proceed as in Proposition \ref{propFrontieraStandard}. By using notation (\ref{notazioneFrontieraStandard}) we can write
\begin{equation}\label{sistemiFrontieraRiflessioneG}
\begin{cases}
\frac{\partial f_0}{\partial t} = -c\frac{\partial f_0}{\partial \eta}+\frac{\lambda(t)}{3}(f_1-3f_0)\\
\frac{\partial f_1}{\partial t} = c\frac{\partial f_1}{\partial \eta}+\frac{\lambda(t)}{3}(f_0-3f_1)
\end{cases}\text{and}\ \ \ 
\begin{cases}
\frac{\partial f}{\partial t} = -c\frac{\partial w}{\partial \eta}-\frac{2\lambda(t)}{3}f\\
\frac{\partial w}{\partial t} = -c\frac{\partial f}{\partial \eta}-\frac{4\lambda(t)}{3}w
\end{cases}
\end{equation}
where we performed the change of variables $f = f_0+f_1,\ w=f_0-f_1$. Now, it is easy to obtain the first formula of system (\ref{sistemaFrontieraAltoDxRiflessione}).

To prove the second part of the statement, we proceed as follows. Let $\{\mathcal{T}_c(t)\}_{t\ge0}$, $c>0$, denotes a one-dimensional telegraph process with rate function $\lambda(t)/3$ and velocities $\pm c$. From the definition of the telegraph motion we obtain that $\mathcal{T}_c(t)/2 \stackrel{d}{=} \mathcal{T}_{c/2}(t)\ \forall\ t$. Now, we express (\ref{formulaGeneraleFrontieraRiflessione}) in terms of the density $q$ of $\mathcal{T}_c(t
)$,
$$q(\eta, t)\dif \eta = P\{\mathcal{T}_c(t)\in \dif \eta\} =  P\Big\{\mathcal{T}_{c/2}(t)\in \frac{\dif \eta}{2}\Big\} = \frac{\dif \eta}{2}\bar p\Bigl(\frac{\eta}{2},t\Bigr) $$
thus $f(\eta,t) = q(\eta, t)e^{-2\Lambda(t)/3}$. By keeping in mind that $q$ satisfies the generalized telegraph equation 
$$\frac{\partial^2 q}{\partial t^2} +\frac{2\lambda(t)}{3}\frac{\partial q}{\partial t} = c^2\frac{\partial^2 q}{\partial \eta^2},\ \text{ with condition }\ \int_{-ct}^{ct}q(\eta, t)\dif \eta = 1-e^{-\frac{\Lambda(t)}{3}},$$ 
it is easy to show that (\ref{formulaGeneraleFrontieraRiflessione}) satisfies (\ref{sistemaFrontieraAltoDxRiflessione}).
\end{proof}

Proposition \ref{proposizioneFrontieraRiflessione} concerns only the side $\partial S^{(1)}_{ct} = \{(x,y)\in \mathbb{R}^2\,:\, x+y = ct,\\ |x-y|\le ct\}$, but it equivalently holds on all the other components of the border $\partial S_{ct}$.

We can interpret the above proposition thanks to the following reasoning. Inspired by the results of Theorem \ref{teoremaDecomposizioneXYUV} we consider the rotated process $\{\bigl(U(t),V(t)\bigr)\}_{t\ge0}$ where
\begin{equation}\label{decomposizioneXYUVRiflessione}
U(t) = \frac{X (t)+Y(t)}{2},\ \ \ V(t) = \frac{X (t)-Y(t)}{2}.
\end{equation}
By means of a direct investigation of the marginal process $U$ (and equivalently $V$) we observe that it describes a telegraph motion with velocity $c/2$ and rate function $2\lambda(t)/3$. In fact, at each Poisson event, the projection $U(t)$ changes direction with probability $2/3$, due to the switch of the motion $\bigl(X(t),Y(t)\bigr)$ (for instance, if $\bigl(X(t),Y(t)\bigr)$ moves with velocity $D(t)=d_0$, $U(t)$ moves with speed $+c/2$ and it will change velocity if $\bigl(X(t),Y(t)\bigr)$ takes either direction $d_1$ or $d_2$).
Therefore, $U$ and $V$ are identical, but dependent, telegraph processes with rate function $2\lambda(t)/3$ and velocity $c/2$.

Now, in view of Proposition \ref{proposizioneFrontieraRiflessione} and (\ref{decomposizioneXYUVRiflessione}), we obtain that 
\begin{align*}
f(\eta,t)\dif \eta &= P\{X(t)+Y(t)=ct,\ X(t)-Y(t)\in \dif \eta\} \\
&= P\Big\{U(t)=\frac{ct}{2}\Big\}P\Big\{ V(t)\in \frac{\dif \eta}{2}\,\Big|\,U(t) = \frac{ct}{2} \Big\} = \frac{e^{-2\Lambda(t)/3}}{2} \frac{\bar p\bigl(\frac{\eta}{2},t\bigr)}{2},
\end{align*}
meaning that $V(t)$, knowing that $U(t)= ct/2$, is distributed as a telegraph process with rate function $\lambda(t)/3$ and velocity $c/2$. 


\begin{remark}
In the particular case $\lambda(t) = \lambda\ \forall\ t$, we can solve system (\ref{sistemaFrontieraAltoDxRiflessione}) by using the transformation $f(\eta,t)=e^{-\lambda t}q(\eta, t)$. This leads to the Klein-Gordon equation $\frac{\partial^2 q}{\partial t^2} -\frac{c^2}{4}\frac{\partial^2 q}{\partial \eta^2}=\frac{\lambda^2}{9}q$, $|\eta|<ct$. Therefore, the solution of  (\ref{sistemaFrontieraAltoDxRiflessione}) is
\begin{align}
f(\eta, t)&=\frac{e^{-\lambda t}}{4c}\Biggl[\frac{\lambda}{3} I_0\Bigl(\frac{\lambda}{3c}\sqrt{c^2t^2-\eta^2} \Bigr)+\frac{\partial}{\partial t}I_0\Bigl(\frac{\lambda}{3c}\sqrt{c^2t^2-\eta^2} \Bigr)\Biggr]\label{frontieraRiflessioneCostante} \\
&=\frac{e^{-\frac{2\lambda t}{3}}}{2}\,\frac{e^{-\frac{\lambda t}{3}}}{2c}\Biggl[\frac{\lambda}{3} I_0\Bigl(\frac{\lambda}{3c}\sqrt{c^2t^2-\eta^2} \Bigr)+\frac{\partial}{\partial t}I_0\Bigl(\frac{\lambda}{3c}\sqrt{c^2t^2-\eta^2} \Bigr)\Biggr]. \nonumber
\end{align} 
which coincides with (\ref{formulaGeneraleFrontieraRiflessione}). Probability (\ref{frontieraRiflessioneCostante}) first appeared in Kolesnik and Orsingher (2001) (formula (3.3)). Furthermore, we derive the distribution conditionally on the exact number of changes of direction, for integer $k\ge0$,
\begin{align}
&P\{X(t)+Y(t) = ct, X(t)-Y(t)\in \dif \eta\,|\,N(t)=2k+1\} \nonumber\\
&=3P\{X(t)+Y(t) = ct, X(t)-Y(t)\in \dif \eta\,|\,N(t)=2k+2\}=\frac{(2k+1)!}{2\,k!^2}\frac{(c^2t^2-\eta^2)^k}{(6ct)^{2k+1}}\dif \eta \nonumber
\end{align}
where $|\eta|<ct$.\hfill $\diamond$
\end{remark}

\begin{proposition}\label{propCroceRiflessione}
Let $\{\bigl(X(t),Y(t)\bigr)\}_{t\ge0}$ be a reflecting orthogonal planar random motion with changes of direction paced by a Poisson process with rate $\lambda(t)$ such that $\Lambda(t) = \int_0^t\lambda(s)\dif s<\infty$ for all $t>0$. Then, for $|x|<ct$, $f(x,t) = P\{X(t)\in \dif x, Y(t)=0\}$, satisfies the differential problem (\ref{sistemaFrontieraAltoDxRiflessione}) with $x$ instead of $\eta$ and it is given by (\ref{formulaGeneraleFrontieraRiflessione}).
\end{proposition}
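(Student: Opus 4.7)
The plan is to mirror the proof of Proposition~\ref{proposizioneFrontieraRiflessione} step for step. The crucial probabilistic observation is that, at a fixed time $t>0$, the event $\{Y(t)=0\}$ carries positive singular probability only for those trajectories starting with direction $d_0$ or $d_2$ and switching exclusively between $d_0$ and $d_2$ on the whole interval $[0,t]$: any excursion into $d_1$ or $d_3$ pushes the particle off the $x$-axis, and since the subsequent inter-arrival times have continuous distribution, a return to $y=0$ at the deterministic instant $t$ then occurs with probability zero. Hence only the two auxiliary densities
\[f_k(x,t)\dif x = P\{X(t)\in \dif x,\ Y(t)=0,\ D(t)=d_k\},\ \ \ k = 0,2,\]
carry the singular mass under study.

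First I would derive the first-order system for $f_0$ and $f_2$ by the usual infinitesimal balance. From $d_0$, the only switch at a Poisson event that preserves $y=0$ is the bounce to $d_2$ (conditional probability $1/3$), while the two orthogonal switches $d_0\to d_1$ and $d_0\to d_3$ each throw the trajectory off the axis, so $f_0$ leaks mass at the aggregate rate $2\lambda(t)/3$. The resulting pair has exactly the form of the first system in (\ref{sistemiFrontieraRiflessioneG}) with $(d_1,\eta)$ replaced by $(d_2,x)$. The substitution $f=f_0+f_2$, $w=f_0-f_2$ then yields the coupled equations $\partial_t f=-c\partial_x w-\tfrac{2}{3}\lambda f$ and $\partial_t w=-c\partial_x f-\tfrac{4}{3}\lambda w$, from which a single further differentiation in $t$ and elimination of $w$ deliver the second-order hyperbolic equation appearing in (\ref{sistemaFrontieraAltoDxRiflessione}).

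For the integral condition, decomposing over the initial direction and the number of Poisson events yields
\[P\{Y(t)=0\}=\tfrac{1}{2}\sum_{n\ge 0}P\{N(t)=n\}(1/3)^n=\tfrac{1}{2}\,e^{-2\Lambda(t)/3},\]
and subtracting the vertex mass $P\{(X(t),Y(t))\in \{(ct,0),(-ct,0)\}\}=\tfrac{1}{2}e^{-\Lambda(t)}$ gives the claimed normalization $\tfrac{1}{2}(e^{-2\Lambda(t)/3}-e^{-\Lambda(t)})$, which is one-half of (\ref{riflessioneProbCroce}) as expected (two diagonals contribute equally). The verification that (\ref{formulaGeneraleFrontieraRiflessione}) actually solves the system is then a word-for-word repetition of the second part of the proof of Proposition~\ref{proposizioneFrontieraRiflessione}, since it only invokes the telegraph equation for $\bar p$ with rate $\lambda(t)/3$ and the scaling identity $\mathcal{T}_c(t)/2\stackrel{d}{=}\mathcal{T}_{c/2}(t)$. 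The only genuinely conceptual point — and the one I expect to have to argue carefully — is the zero-probability claim for paths that have left the $x$-axis; everything else is a symbol-by-symbol transcription of the edge-case calculation, which is possible precisely because switching the partner direction from $d_1$ to $d_2$ changes only the geometric interpretation and not the transition rates between the two surviving directions or the rate of mass loss to the complement.
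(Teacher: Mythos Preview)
Your proof is correct and follows essentially the same approach as the paper's own proof: define the two auxiliary densities for directions $d_0$ and $d_2$, observe that they satisfy exactly the same first-order system (\ref{sistemiFrontieraRiflessioneG}) as in the boundary case (with $f_1\to f_2$ and $\eta\to x$), and hence the second-order equation and the explicit solution (\ref{formulaGeneraleFrontieraRiflessione}) carry over verbatim. The paper's proof is terser---it simply asserts the system is the same and stops---so your additional justification of the integral condition and of the zero-probability claim for paths that leave the axis is a welcome fleshing-out rather than a different route.
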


The proposition concerns the distribution on the horizontal diagonal of $S_{ct}$, but it equivalently holds on the vertical one $\{(x,y)\in S_{ct}\,:\,x=0\}$.

\begin{proof}
The probabilities
\begin{equation*}
\begin{cases}
g_0(x, t) \dif \eta = P\{X(t)\in \dif x,\ Y(t)=0,\ D(t) = d_0\}\\
g_2(x, t) \dif \eta = P\{X(t)\in \dif x,\ Y(t)=0,\ D(t) = d_2\}
\end{cases}
\end{equation*}
satisfy the systems appearing in (\ref{sistemiFrontieraRiflessioneG}) with $f_0\rightarrow g_0,\,f_1\rightarrow g_2$ and $f = g_0+g_2,\, w = g_0-g_2$.
\end{proof}

We now focus on the marginal component of the reflecting motion.

\begin{theorem}
Let $\{\bigl(X(t),Y(t)\bigr)\}_{t\ge0}$ be a reflecting orthogonal planar motion whose changes of direction are governed by a Poisson process $\{N(t)\}_{t\ge0}$ with rate function $\lambda(t)\in C^1\bigl((0,\infty),[0,\infty)\bigl)$. The absolutely continuous component of the distribution of the marginal processes $X(t)$ (and equivalently $Y(t)$) satisfies the following third-order partial differential equation
\begin{equation}\label{equazioneLeggeMarginaleRiflessione}
\frac{\partial^3p}{\partial t^3} +\frac{8\lambda(t)}{3}\frac{\partial p^2}{\partial t^2} +\frac{4}{3}\Bigl(\frac{4}{3}\lambda^2(t)+\lambda'(t) \Bigr) \frac{\partial p}{\partial t}= c^2\frac{\partial^3 p}{\partial t\partial x^2}+\frac{2c^2 \lambda(t)}{3}\frac{\partial^2 p}{\partial x^2}.
\end{equation}
Finally, if $\int_0^t\lambda(s)\dif s<\infty$, 
\begin{equation*}\label{}
P\{X(t) = ct\}=P\{X(t) = -ct\}=\frac{e^{-\int_0^t\lambda(s)\dif s}}{4},\ \ \ \ P\{X(t) = 0\} = \frac{e^{-\frac{2}{3}\int_0^t\lambda(s)\dif s}}{2}.
\end{equation*}
\end{theorem}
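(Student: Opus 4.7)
My plan is to handle the singular masses by direct path-counting and then derive the PDE by mimicking the scheme in the proof of Theorem \ref{teoremaMotoConStop}, with transition rates suitably adjusted to account for the bouncing-back mechanism. For the singular probabilities, the event $\{X(t)=\pm ct\}$ forces the particle to move in direction $d_0$ (respectively $d_2$) during the whole interval $[0,t]$, which happens iff $D(0)=d_0$ (or $d_2$) and $N(t)=0$, giving mass $\tfrac14 e^{-\Lambda(t)}$. For $\{X(t)=0\}$ I would split into the trajectories that never change direction and start vertical (contributing $\tfrac12 e^{-\Lambda(t)}$) and those with at least one Poisson event ending on the vertical diagonal $\{x=0,\,y\neq 0\}$, whose mass is half of (\ref{riflessioneProbCroce}) by symmetry between the two diagonals; the two contributions sum to $\tfrac12 e^{-2\Lambda(t)/3}$.

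For the PDE, I first introduce the marginalised sub-densities $f_k(x,t)\,dx = P\{X(t)\in dx,\,D(t)=d_k\}$, which by integrating (\ref{sistemaRiflessione}) in $y$ satisfy a first-order $4\times 4$ system with transport terms $\mp c\,\partial_x$ for $k=0,2$ and no transport for $k=1,3$. Invoking the $y$-reflection symmetry of the motion gives $f_1=f_3$, so the system collapses to a three-state one in $(f_+,f_-,f_\circ):=(f_0,f_2,f_1+f_3)$, namely
\[
\begin{cases}
\partial_t f_+ = -c\,\partial_x f_+ -\lambda f_+ + \tfrac{\lambda}{3}(f_-+f_\circ),\\
\partial_t f_- = c\,\partial_x f_- -\lambda f_- + \tfrac{\lambda}{3}(f_++f_\circ),\\
\partial_t f_\circ = \tfrac{2\lambda}{3}(f_++f_--f_\circ),
\end{cases}
\]
where the escape rate from the $0$-state is $2\lambda/3$ (not $\lambda$ as in Theorem \ref{teoremaMotoConStop}) because, starting from a vertical direction, only two of the three available switches alter the horizontal velocity.

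Next I set $g_\pm = f_+\pm f_-$, $p = g_++f_\circ$, $w = g_+-f_\circ$, which yields $\partial_t p = -c\,\partial_x g_-$, together with $\partial_t g_- = -c\,\partial_x g_+ - \tfrac{4\lambda}{3}g_-$ and $\partial_t w = \partial_t p - \tfrac{4\lambda}{3}w$. A single time-differentiation of the equation for $p$ produces the intermediate identity $\partial_{tt}p + \tfrac{4\lambda}{3}\partial_t p = \tfrac{c^2}{2}(\partial_{xx}p + \partial_{xx}w)$, from which $c^2\partial_{xx}w$ can be solved algebraically in terms of derivatives of $p$. A further differentiation in $t$, combined with that substitution, eliminates $w$ entirely and yields (\ref{equazioneLeggeMarginaleRiflessione}). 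The main obstacle will be the bookkeeping: the time-dependence of $\lambda$ generates $\lambda'(t)$-terms at several stages, and one must check that they assemble into precisely the coefficient $\tfrac{4}{3}(\tfrac{4}{3}\lambda^2+\lambda')$ of $\partial_t p$. This is the same type of cancellation met in the proof of Theorem \ref{teoremaMotoConStop}, but with the effective rates there ($\lambda$ and $\lambda/2$) here replaced by $4\lambda/3$ and $2\lambda/3$, reflecting the three-out-of-four switching rule of the reflecting motion.
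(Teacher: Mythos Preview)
Your proposal is correct and follows essentially the same route as the paper: the three-state system you write for $(f_+,f_-,f_\circ)$ coincides (up to relabelling) with the paper's system (\ref{sistemaMarginaleRiflessione}), and the elimination via $g_\pm$, $p$, $w$ is exactly the reduction the paper points to when it says ``proceed as in the proof of Theorem \ref{teoremaMotoConStop}''. The only cosmetic difference is that you obtain the three-state system by integrating (\ref{sistemaRiflessione}) in $y$ and invoking the $y$-reflection symmetry $f_1=f_3$, whereas the paper writes it down directly from the transition mechanism (cf.\ (\ref{equazioneEstesaF0})); both are valid and lead to the same equations.
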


\begin{proof}
In order to derive the absolutely continuous component of the distribution of the projection process $\{X(t)\}_{t\ge0}$ we use probabilities (\ref{leggiMotoUnidimensionaleCongiuntoV}) with $X$ instead of $\mathcal{T}$. The system governing the functions reads
\begin{equation}\label{sistemaMarginaleRiflessione}
\begin{cases}
\frac{\partial f_1}{\partial t} = -c\frac{\partial f_1}{\partial x}+\frac{\lambda(t)}{3}(f_0+f_2-3f_1),\\
\frac{\partial f_0}{\partial t} =\frac{2}{3}\lambda(t)(f_1+f_2-f_0),\\
\frac{\partial f_2}{\partial t} = c\frac{\partial f_2}{\partial x}+\frac{\lambda(t)}{3}(f_0+f_1-3f_2).
\end{cases}
\end{equation}
Some explanation is needed for the second equation of (\ref{sistemaMarginaleRiflessione}). It follows by writing
\begin{equation}\label{equazioneEstesaF0}
f_0(x,t+\dif t) = f_0(x,t)(1-\lambda(t)\dif t)+f_0(x,t)\frac{1}{3}\lambda(t)\dif t+\bigl(f_1(x,t)+f_2(x,t)\bigr)\frac{2}{3}\lambda(t)\dif t + o(\dif t).
\end{equation}
The second term of the second member of (\ref{equazioneEstesaF0}) must be interpreted by considering that if $X(t)$ is stopped at $x$, the planar motion is moving vertically and if a Poisson event occurs, with probability $2/3$ the particle starts moving horizontally and with probability $1/3$ reflects and thus the $x$-coordinate does not change. By $f_1(x,t)2\lambda(t)\dif t /3$ we represent the probability that the particle is running rightward and the new direction is either vertical upwards or vertical downwards, the same explanation holds  for the term with $f_2$.
\\Now, the claimed result follows by proceeding as in the proof of Theorem \ref{teoremaMotoConStop}.
\end{proof}

We observe that the marginal processes $X(t)$ and $Y(t)$ behave like a one-dimensional process $\{\mathcal{T}(t)\}_{t\ge0}$ whose changes of direction are paced by $N(t)$, and that is defined as $\mathcal{T}(t) = \int_0^t V(s)\dif s$, where $\{V(t)\}_{t\ge0}$ describes the random velocity such that $V(0) = V_0$,
\[V_0 = \begin{cases}\begin{array}{r l}
+c, & w.p.\ 1/4,\\0, & w.p.\ 1/2,\\-c ,& w.p.\ 1/4,
\end{array}\end{cases}\text{and} \ \ \begin{cases}
P\{V(t+\dif t)=0\,|\,V(t) = 0,\,N(t,t+\dif t]=1\}=1/3,\\
P\{V(t+\dif t)=c\,|\,V(t) = 0,\,N(t,t+\dif t]=1\}=1/3,\\
P\{V(t+\dif t)=0\,|\,V(t) = c,\,N(t,t+\dif t]=1\}=2/3,\\
P\{V(t+\dif t)=-c\,|\,V(t) = c,\,N(t,t+\dif t]=1\}=1/3,
\end{cases}\]
and equivalently by replacing $-c$ with $c$ and vice versa.

\subsubsection*{Explicit representation of the reflecting planar motion.}
Let us assume $\Lambda(t) = \int_0^t\lambda(s)\dif s<\infty$. Here we explore the rotation (\ref{decomposizioneXYUVRiflessione}) of the reflecting orthogonal motion $\bigl(X(t),Y(t)\bigr),\ t\ge0$. We previously explained that the processes $\{U(t)\}_{t\ge0}$ and $\{V(t)\}_{t\ge0}$ appearing in (\ref{decomposizioneXYUVRiflessione}) are two dependent one-dimensional telegraph processes with rate function $2\lambda(t)/3$ and velocity $c/2$. We now describe their dependence in terms of the Poisson processes, of rate $2\lambda(t)/3$, governing the changes of velocities of $U$ and $V$, denoted by $\{N^{(U)}(t)\}_{t\ge0}$ and $\{N^{(V)}(t)\}_{t\ge0}$ respectively. For $t\ge0$, we write that
\begin{equation}\label{decomposizioneProcessiPoisson}
N^{(U)}(t) = N_U(t)+\tilde N(t),\ \ \ \ N^{(V)}(t) = N_V(t)+\tilde N(t),
\end{equation}
where $\{N_U(t)\}_{t\ge0},\ \{N_V(t)\}_{t\ge0}$ and $\{\tilde N(t)\}_{t\ge0}$ are three independent Poisson processes with rate function $\lambda(t)/3$. Clearly, the presence of $\tilde N$ in both $N^{(U)}$ and $N^{(V)}$ makes $U$ and $V$ dependent.
\\In order to explain expression (\ref{decomposizioneProcessiPoisson}), we recall that the process $\{N(t)\}_{t\ge0}$, denotes the Poisson process, with rate $\lambda(t)$, governing the changes of direction of the reflecting planar motion. Now, when $N$ indicates the occurrence of an event, the particle can:
\begin{itemize}
	\item[($i$)] switch to an orthogonal direction and therefore only one among $U=(X+Y)/2$ and $V=(X-Y)/2$ changes velocity (for instance, let the particle moving with direction $d_0$. This means that $X$ increases and therefore both $U$ and $V$ move with positive speed. If the particle deviates to the orthogonal direction $d_1$, $X$ stops and $Y$ increases, thus $U$ continues moving with positive velocity while $V$ starts moving negatively oriented. This also implies that $N_V$ perceives the event. On the other hand, if the particle deviates from $d_0$ to $d_3$, that is $Y$ decreases, $U$ starts moving backward and $V$ continues moving forward, meaning that $N_U$ makes a jump);
	\item[($ii$)] reflect to the opposite direction and therefore both $U$ and $V$ change velocity (meaning that $\tilde N$ perceives the event).
\end{itemize}
Thus, at each event counted by $N$, the process $U$ changes velocity and $V$ does not with probability $1/3$, $U$ does not change speed and $V$ does with probability $1/3$ and they both invert their velocity with probability $1/3$. This argument justifies the form (\ref{decomposizioneProcessiPoisson}) of the Poisson processes connected to $U$ and $V$. In particular, the process $\tilde N$ counts the simultaneous switches of $U$ and $V$.

Clearly, the total number of switches up to time $t\ge0,$ is given by $N(t) = N_U(t)+N_V(t) + \tilde N(t)\ a.s.$. In view of (\ref{decomposizioneXYUVRiflessione}) and (\ref{decomposizioneProcessiPoisson}) we easily obtain result (\ref{formulaGeneraleFrontieraRiflessione}). In fact, for $|\eta|<ct$
\begin{align*}
P\{&X(t)+Y(t)=ct,\, X(t)-Y(t)\in \dif \eta\} = P\Big\{U(t) = \frac{ct}{2},\,V(t)\in \frac{\dif \eta}{2}\Big\}\\
&= P\Big\{U(t) = \frac{ct}{2}\Big\}P\Big\{V(t)\in \frac{\dif \eta}{2}\,\Big|\,U(t) = \frac{ct}{2}\Big\}=P\Big\{U(t) = \frac{ct}{2}\Big\}P\Big\{V(t)\in \frac{\dif \eta}{2}\,\Big|\,\tilde N(t)=0\Big\}
\end{align*}
which coincides with (\ref{formulaGeneraleFrontieraRiflessione}) since, conditionally on $\tilde N(t)=0$, $V(t)$ is a telegraph process with changes of velocity governed by $N^{(V)}(t) = N_V(t)\sim Poisson(\Lambda(t)/3)$.

Finally, we give also the exact form of the general density of the position at time $t\ge0$. For $(x,y)\in S_{ct}\setminus \partial S_{ct}$,
\begin{align}
&P\{X(t)\in\dif x,\, Y(t)\in \dif y\} = P\Big\{U(t) \in \frac{\dif x+y}{2},\, V(t)\in \frac{x-\dif y}{2}\Big\} \nonumber\\
& = \sum_{\substack{n_U,n_V,n =0\\ n_U+n_V+n\ge1}}^\infty P\Big\{U(t) \in \frac{\dif x+y}{2},\, V(t)\in \frac{x-\dif y}{2}\,\Big|\,N_U(t)=n_U,\,N_V(t)=n_V,\,\tilde N(t)=n\Big\}\nonumber\\
&\ \ \ \times P\{N_U(t)=n_U,\,N_V(t)=n_V,\,\tilde N(t)=n\}\nonumber\\
& =  \sum_{\substack{n_U,n_V,n =0\\ n_U+n_V+n\ge1}}^\infty P\Big\{U(t) \in \frac{\dif x+y}{2}\,\Big|N^{(U)}(t)=n_U+n\Big\}P\Big\{V(t)\in \frac{x-\dif y}{2}\,\Big|N^{(V)}(t)=n_V+n\Big\}\nonumber\\
&\ \ \ \times P\{N_U(t)=n_U\}P\{N_V(t)=n_V\}P\{\tilde N(t)=n\}\nonumber\\
& = \sum_{h=0}^\infty P\Big\{U(t) \in \frac{\dif x+y}{2}\,\Big|N^{(U)}(t)=h\Big\}\sum_{k=0}^\infty P\Big\{V(t)\in \frac{x-\dif y}{2}\,\Big|N^{(V)}(t)=k\Big\}\label{leggeRiflessioneDecomposizione}\\
&\ \ \ \times \sum_{\substack{n =0\\ h+k\ge1}}^{\min\{h,k\}}P\{N_U(t)=h-n\}P\{N_V(t)=k-n\}P\{\tilde N(t)=n\}\nonumber.
\end{align}
The probabilities appearing in the last sum of (\ref{leggeRiflessioneDecomposizione}) are well-known, while the conditional distributions in the first two sums are known in the case of $\lambda(t)=\lambda>0\ \forall \ t$.
\\

We note that in the case of the standard orthogonal motion representation (\ref{decomposizioneProcessiPoisson}) holds with $\tilde N(t)=0\ \forall \ t\ a.s.$ and $N_U(t),N_V(t)\sim Poisson(\Lambda(t)/2)$, because at each event recorded by $N$ the one-dimensional process $U$ revers its speed and $V$ does not with probability $1/2$ and vice versa. The crucial fact is that $U$ and $V$ can not switch simultaneously.

\begin{remark}[Conjecture on the $L_1$-distance]
Here we conjecture a connection between the probabilities of the reflecting motion and the standard one. Let $S_u = \{(x,y)\in\mathbb{R}^2\,:\, |x|+|y|<u\}$, with $0\le u\le ct$, $p(x,y,t)$ be the absolutely continuous component of the reflecting orthogonal motion $\{\bigl(X(t),Y(t)\bigr)\}_{t\ge0}$ and $f(x,t)$ be the distribution on the $x$-diagonal of $S_{ct}$ (that coincides with the distribution on the $y$-diagonal, see Proposition \ref{propCroceRiflessione}). Now, we can write, by also assuming $Z(t)=|X(t)|+|Y(t)|,\ t\ge0$, (look at Figure \ref{L1dist})
\begin{align}
 P\{Z(t)< u\} &= P\big\{\bigl(X(t),Y(t)\bigr)\in S_{u}\big\}\nonumber\\
 & =4\int_0^u\dif x\int_0^{u-x}p(x,y,t)\dif y+2\int_{-u}^uf(x,t)\dif x.
 \end{align}
If $\Lambda(t) = \int_0^t\lambda(s)\dif s<\infty$, for $u=ct$, thanks to (\ref{riflessioneProbFrontiera}) and the probability that the motion reaches one of the vertexes, i.e. $e^{-\Lambda(t)}$, we have that
\begin{equation}\label{CongetturaDistanza2} P\{Z(t)< ct\} =P\big\{\bigl(X(t),Y(t)\bigr)\in S_{ct}\big\} = \Bigl(1-e^{-\frac{2}{3}\Lambda(t)}\Bigr)^2+e^{-\Lambda(t)}\Bigl(1-e^{-\frac{\Lambda(t)}{3}}\Bigr).
\end{equation}
The first term of the right-hand-side of (\ref{CongetturaDistanza2}) is related to the singular part of a standard orthogonal motion $\{\bigl(X_S(t),Y_S(t)\bigr)\}_{t\ge0}$ with rate function $2\lambda(t)/3$ and velocity $c$. The second part can be interpreted as
$$P\{N(t)= 0\}\int_{-ct}^{ct}P\{\mathcal{T}(t)\in \dif x\} = e^{-\Lambda(t)}\Bigl(1-e^{-\frac{\Lambda(t)}{3}}\Bigr)$$
where $\mathcal{T}$ is a one-dimensional telegraph process with parameters $(\lambda(t)/3, c/2)$, independent of the Poisson process.
Since this decomposition is true on the border we can imagine to extend it to a general square $S_u$. Therefore, we may conjecture that, for $0\le u\le ct$ and by considering the above notation,
\begin{align}
P&\{Z(t)<u\} = P\big\{\bigl(X(t),Y(t)\bigr)\in S_{u}\big\}\label{congetturaDistanzaRiflessione} \\
&=4\int_0^u\dif x\int_0^{u-x} P\{X_S(t)\in\dif x,\, Y_S(t)\in\dif y\}+P\{N(t)=0\}\int_{-u}^{u} P\{\mathcal{T}(t)\in \dif x\}.\nonumber
\end{align}
Thanks to Theorem \ref{teoremaDecomposizioneXYUV} and the known literature concerning the telegraph motion, all the distributions appearing in the second term of (\ref{congetturaDistanzaRiflessione}) are known in the case where the rate function is either $\lambda(t) = \lambda$ or $\lambda(t)= \lambda\,\text{th}(\lambda t)$, with $\lambda>0$.
\end{remark}

\subsection{Reflecting motion with Bernoulli trials}

Here, we study a reflecting orthogonal planar random motion $\{\bigl(X(t),Y(t)\bigr)\}_{t\ge0}$ which can skip the change of direction with probability $1-q\in[0,1)$. This means that when a Poisson event occurs, the particle continues with the same direction with probability $1-q$ and it switches to each of the other possible directions with probability $q/3$. We call this process \textit{$q$-reflecting orthogonal motion}, $q\in(0,1]$ (or reflecting orthogonal motion with Bernoulli trials).

\begin{theorem}
The $q$-reflecting orthogonal planar motion $\{\bigl(X_q(t),Y_q(t)\bigr)\}_{t\ge0}$, with $q\in(0,1]$ and rate function $\lambda(t)\in C^2\bigl((0,\infty),[0,\infty)\bigl)$ is equal in distribution to a reflecting orthogonal planar motion with rate function $q\lambda(t)$.
\end{theorem}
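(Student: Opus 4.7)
The plan is to repeat the two-step strategy used for the standard motion with Bernoulli trials in Remarks \ref{remarkStandardBernoulliEquazione} and \ref{remarkStandardBernoulliLegge}: I would first derive the Kolmogorov forward system satisfied by the joint densities of the $q$-reflecting process, and then verify that it coincides with the reflecting-motion system (\ref{sistemaRiflessione}) after the substitution $\lambda(t)\mapsto q\lambda(t)$. Once equality of the governing systems is established (together with equality of the initial laws), equality in distribution of the two vector processes follows from uniqueness.

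Denoting by $\{D_q(t)\}_{t\ge 0}$ the direction of the $q$-reflecting particle and setting
\[
f_k^{q}(x,y,t)\,\dif x\,\dif y = P\{X_q(t)\in \dif x,\,Y_q(t)\in \dif y,\,D_q(t)=d_k\},\ \ \ k=0,1,2,3,
\]
I would reason as in the proof of Theorem \ref{teoremaEquazioneRiflessione}, taking into account that at each Poisson event the particle now either skips (with probability $1-q$) or deviates to each of the three remaining directions (with probability $q/3$). The one-step relation for $f_0^q$ then reads
\begin{align*}
f_0^q(x,y,t+\dif t) =\,&f_0^q(x-c\dif t,y,t)\bigl[1-\lambda(t)\dif t+(1-q)\lambda(t)\dif t\bigr]\\
&+\bigl[f_1^q(x,y-c\dif t,t)+f_2^q(x+c\dif t,y,t)+f_3^q(x,y+c\dif t,t)\bigr]\tfrac{q\lambda(t)}{3}\dif t+o(\dif t),
\end{align*}
with analogous expressions for $f_1^q,f_2^q,f_3^q$. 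The first bracket collapses to $1-q\lambda(t)\dif t$ and each cross term carries the factor $q\lambda(t)/3$, so letting $\dif t\downarrow 0$ yields exactly system (\ref{sistemaRiflessione}) with $\lambda(t)$ replaced by $q\lambda(t)$. Since both processes share the initial condition ``particle at the origin with uniform direction on $\{d_0,d_1,d_2,d_3\}$'', uniqueness for this first-order hyperbolic system identifies the absolutely continuous components.

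To complete the identification I would match the singular atoms on $\partial S_{ct}$, on the diagonals $\partial D_{ct}$ and at the vertices $V_{ct}$ by a direct combinatorial argument based on Poisson thinning: the actual direction changes in the $q$-reflecting motion form a Poisson process with rate $q\lambda(t)$, and conditionally on one occurring the new direction is uniform on the three remaining ones, precisely as in the reflecting motion. Consequently the vertex mass equals $e^{-q\Lambda(t)}$, the side mass equals $2\bigl(e^{-2q\Lambda(t)/3}-e^{-q\Lambda(t)}\bigr)$ and the open-diagonal mass equals $e^{-2q\Lambda(t)/3}-e^{-q\Lambda(t)}$, which are exactly the values (\ref{riflessioneProbFrontiera}) and (\ref{riflessioneProbCroce}) for the reflecting motion with rate $q\lambda(t)$. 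I do not foresee a genuine analytic obstacle; the only slightly delicate point is recognising that the apparent ``extra'' term $(1-q)\lambda(t)\dif t\cdot f_0^q$ is absorbed into the loss rate together with $-\lambda(t)\dif t\cdot f_0^q$ to yield the effective rate $q\lambda(t)$, a purely algebraic manipulation already carried out in Remark \ref{remarkStandardBernoulliEquazione}.
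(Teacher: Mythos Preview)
Your proposal is correct and follows essentially the same approach as the paper: both derive the one-step relation for the $f_k^q$, observe that the $(1-q)\lambda(t)\dif t$ term is absorbed so that system (\ref{sistemaRiflessione}) reappears with $q\lambda(t)$, and then match the singular components. The only minor difference is that the paper treats the singular pieces more laboriously---computing the vertex, side, and diagonal masses via explicit combinatorial sums and separately checking that the boundary and diagonal densities satisfy the corresponding first-order systems with $q\lambda(t)$---whereas your Poisson-thinning argument dispatches all singular parts at once (and is in fact the argument the paper itself invokes in the subsequent general remark).
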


\begin{proof}
Clearly, at time $t\ge0$, the set of possible positions of the moving particle is the square $S_{ct}$, defined in (\ref{supportoQuadrato}). 

If $\Lambda(t) = \int_0^t\lambda(s)\dif s<\infty$, the singular component of the distribution is composed of the border $\partial S_{ct}$ and the diagonals of the square, $\partial D_{ct}$, defined above (see the beginning of Section \ref{Sezione3}). Let $V_{ct}=\{(0,\pm ct),(\pm ct,0)\}$, then
\begin{equation*}
P\{\bigl(X_q(t),Y_q(t)\bigr) \in V_{ct}\} = \sum_{n=0}^\infty P\{N(t)=n\}(1-q)^n = e^{-q\Lambda(t)},
\end{equation*}
because the particle reaches a vertex if it never changes direction.
\begin{align*}
P\{\bigl(X_q(t),Y_q(t)\bigr) \in \partial S_{ct}\setminus V_{ct}\} &= \sum_{n=1}^\infty P\{N(t)=n\} \sum_{k=0}^{n-1}(1-q)^k\frac{2q}{3}\Bigl(\frac{q}{3}+(1-q)\Bigr)^{n-k-1} \\
&= 2\Bigl(e^{-\frac{2q\Lambda(t)}{3}}-e^{-q\Lambda(t)}\Bigr),\nonumber
\end{align*}
where the second sum represents all the possible steps where the particle continues along the initial direction for $k$ times (with probability $(1-q)^k$), then it changes to one of the two orthogonal directions ($2q/3$) and finally it keeps moving towards the edge by choosing between this last direction and the starting one for the remaining $n-k-1$ displacements.
\\The probability of remaining on the diagonals is instead equal to
\begin{align*}
P\{\bigl(X_q(t),Y_q(t)\bigr) \in\partial D_{ct}\setminus V_{ct}\} &= \sum_{n=0}^\infty P\{N(t)=n\}\sum_{k=0}^{n-1}\binom{n}{k}(1-q)^k\Bigl(\frac{q}{3}\Bigr)^{n-k} \\
&=e^{-\frac{2q\Lambda(t)}{3}}-e^{-q\Lambda(t)},
\end{align*}
where the second sum represents all the possible sequences of directions containing only the initial one and the opposite one, which appears at least once.

By applying notation (\ref{notazioneFrontieraStandard}) for the $q$-reflecting motion, it is easy to obtain that $f_0,f_1$ satisfy the differential system (\ref{sistemiFrontieraRiflessioneG}) with $q\lambda(t)$ replacing $\lambda(t)$. Therefore, the density on the border $\partial S_{ct}^{(1)}$ satisfies system (\ref{sistemaFrontieraAltoDxRiflessione}) with $q\lambda(t)$ in place of $\lambda(t)$. This is sufficient to prove that the stated equality in distribution holds on the border of the square $S_{ct}$.

Similarly, with Proposition \ref{propCroceRiflessione} at hand, we obtain that the equality in distribution holds on the diagonals of the support by observing that $f(x,t) \dif x= P\{X_q(t)\in \dif x, Y_q(t)=0\}$ satisfies system (\ref{sistemaFrontieraAltoDxRiflessione}) with $q\lambda(t)$ replacing $\lambda(t)$.

For the absolutely continuous component we consider notation (\ref{notazione}). For $(x,y)\in S_{c(t+\dif t)}$
\begin{align*}
f_0(x,y,& t+\dif t) = f_0(x-c\dif t,y, t)\bigl(1-\lambda(t)\dif t\bigr) + f_0(x-c\dif t,y, t)\lambda(t)\dif t(1-q)\\
&+\Bigl(f_1(x,y-c\dif t,t) +f_2(x+c\dif t,y,t)+ f_3(x,y+c\dif t,t)\Bigr)\frac{q}{3}\lambda(t)\dif t +o(\dif t)
\end{align*}
and similarly for $f_1,f_2$ and $f_3$. These relationships yield system (\ref{sistemaRiflessione}) with $q\lambda(t)$ instead of $\lambda(t)$. This is sufficient to conclude the proof of the theorem.
\end{proof}

Clearly, under Kac's conditions, the $q$-reflecting orthogonal motion converge to planar Brownian motion with diffusivity $3\sigma^2/(4q)$.

\begin{remark}[Uniform orthogonal planar motion]
A particular case of the $q$-reflecting motion is the uniform orthogonal random motion, that is the process describing the position of a particle that, at each Poisson event, uniformly chooses the new direction among all the four possible directions. We obtain this motion if $q =3/4$. Note that, if $\lambda(t) = \lambda>0,\,t\ge 0$, the motion replicates at each Poisson event independently on the previous displacements and velocities. \hfill$\diamond$
\end{remark}

\begin{remark}[General random motion with Bernoulli trials]
It is interesting to observe that the behavior presented for the motions with Bernoulli trials, meaning with a positive probability to skip the change of direction, can be easily extended. In particular, a $q$-standard or $q$-reflecting motion with rate function $\lambda(t)>0,\ t\ge0$, and a time-varying probability of change of direction $q(t)\in (0,1]$, is equal in distribution to the ``basic'' motion with rate function $\lambda_q(t) = q(t)\lambda(t)$.

More generally, we can state the following.
\\\textit{Let $X_q = \{X_q(t)\}_{t\ge0}$ be a random motion in $\mathbb{R}^d$ moving with $n$ different velocities, $d,n\in \mathbb{N}$. The changes of direction of $X_q$ are paced by a Poisson process $\{N(t)\}_{t\ge0}$ with rate function $\lambda(t)\in C^n\bigl((0,\infty),[0,\infty) \bigr)$. Assume that $X_q$, at any Poisson event, skips the change of direction with probability $q(t)\in C^n\bigl((0,\infty),[0,1] \bigr)$. Then, $X_q$ is equal in distribution to a motion $\{X(t)\}_{t\ge0}$ that behaves like $X_q$, but it can not skip the switch of direction and has rate function $\lambda_q(t) = q(t)\lambda(t)$.}

The statement can be proved by observing that the point process governing the changes of direction of $X_q$ is equal in distribution to a Poisson process with rate function $\lambda_q(t)$. 
\hfill$\diamond$
\end{remark}








\footnotesize{

}

\end{document}